\documentclass{siamart0516}

\usepackage{lipsum}
\usepackage{amsfonts}
\usepackage{graphicx}
\usepackage{epstopdf}
\usepackage{algorithmic}
\usepackage{enumerate} 
\usepackage{mathtools} 
\usepackage{tkz-graph}
\usepackage{tikz}
\usetikzlibrary{arrows}
\usetikzlibrary{arrows.meta}

\usepackage{bbold}
\ifpdf
  \DeclareGraphicsExtensions{.eps,.pdf,.png,.jpg}
\else
  \DeclareGraphicsExtensions{.eps}
\fi

\crefname{lemma}{Lemma}{Lemmata}                      
\crefname{corollary}{Corollary}{Corollaries}          
\crefname{theorem}{Theorem}{Theorems}                 
\Crefname{lemma}{L}{Ls}                               
\Crefname{corollary}{C}{Cs}                           
\Crefname{theorem}{T}{Ts}                             

\newcommand{\TheTitle}{Existence of Positive Steady States for Weakly Reversible Mass-Action Systems} 
\newcommand{\TheAuthors}{B. Boros}

\headers{Positive Steady States of Mass-Action Systems}{\TheAuthors}

\title{{\TheTitle}\thanks{\funding{This work was supported by the Austrian Science Fund (FWF), project P28406.
}}}

\author{
Bal\'azs Boros\thanks{Faculty of Mathematics, University of Vienna, Oskar-Morgenstern-Platz 1, 1090 Wien, Austria
(\email{balazs.boros@univie.ac.at}, \url{https://web.cs.elte.hu/\string~bboros/}).}
}

\usepackage{amsopn}
\DeclareMathOperator{\ran}{ran}

\ifpdf
\hypersetup{
  pdftitle={\TheTitle},
  pdfauthor={\TheAuthors}
}
\fi


\begin{document}

\maketitle

\begin{abstract}
We prove the following. For each weakly reversible mass-action system, there exists a positive steady state in each positive stoichiometric class.
\end{abstract}

\begin{keywords}
Brouwer's Fixed Point Theorem, mass-action system, weak reversibility, positive stoichiometric class, positive steady state 
\end{keywords}

\begin{AMS}
34C10, 80A30, 92E20
\end{AMS}

\section{Introduction}
\label{sec:intro}

Mass-action dynamical systems are probably the most common mathematical models in biochemistry, cell biology, and population dynamics. Reversible and weakly reversible mass-action systems are the most studied classes of such systems, and are ubiquitous in mathematical biology. Moreover, they represent a large class of polynomial dynamical systems that are very important both theoretically and from the point of view of applications.

The four authors Jian Deng, Martin Feinberg, Christopher Jones, and Adrian Nachman posted the manuscript \cite{deng:feinberg:jones:nachman:2011} on \url{arXiv.org} in 2011, claiming that there exists a positive steady state in each positive stoichiometric class for every weakly reversible mass-action system.
As mentioned in \cite[Remark 3.4]{deng:feinberg:jones:nachman:2011}, a result corresponding to \cref{lemma:Gzz_neg} of the present paper was established by Adrian Nachman in the early 1980s, and he obtained the existence of a positive steady state in each positive stoichiometric class for every weakly reversible mass-action system with a \emph{single} linkage class. However, his proof has never been made publicly available. Based on Adrian Nachman's work, roughly $20$ years later, Jian Deng attacked the problem of extending the result to the general case, i.e., for \emph{multiple} linkage classes. This resulted in the \url{arXiv.org} posting \cite{deng:feinberg:jones:nachman:2011} in 2011. The main feature of their approach is that the original problem, which is in the space of \emph{species}, is translated to the existence of a zero of a map in the space of \emph{complexes}. Then they use Brouwer's Fixed Point Theorem, directly to a ball in the single linkage class case, and for technical reasons to a modification of a ball in the two linkage classes case. They claim that similar arguments lead to resolve the case of three or more linkage classes.

The ideas of both Adrian Nachman and Jian Deng are important building blocks that are used and developed further in the present work. On the one hand, there was a need for a clarified presentation, on the other hand, the case of three or more linkage classes had to be worked out. This motivated the present work.

In \cite{deng:feinberg:jones:nachman:2011}, Deng et al.\ first concentrate on proving the existence of a positive steady state at all. Only at the very end, using the accumulated material, they address existence within positive stoichiometric classes. In the present paper, we right from the beginning consider one fixed positive stoichiometric class and show the existence of a positive steady state there. As a result, we quickly arrive to a complete proof in the single linkage class case, and the line of the presentation becomes straightforward.

We note that the existence of a positive steady state was already proven in some special cases, see the Deficiency-Zero and Deficiency-One Theorems in \cite{feinberg:1995a}, the reversible case in \cite{orlov:rozonoer:1984b}, the reversible case in two dimension in \cite{simon:1995}, and the deficiency-one case in \cite{boros:2013a}.

Finally, the authors of \cite{deng:feinberg:jones:nachman:2011} claim not only the existence, but also the finiteness of positive steady states in each positive stoichiometric class, assuming weak reversibility. However, their argument is insufficient. Moreover, with Gheorghe Craciun and Polly Yu, we have constructed counterexamples to this claim \cite{boros:craciun:yu:2018}.

The rest of this paper is organised as follows. After introducing some notations and the necessary notions from CRNT in \Cref{sec:notations,sec:mass_action_systems}, respectively, we state the main result of this paper in \Cref{sec:main_result}. We perform some preliminary steps of the proof of the main result in \Cref{sec:prelim_towards_main_result}, followed by the proof in the single linkage class case in \Cref{sec:proof_ell_1}. Before we turn to the proof in the multiple linkage classes case in \Cref{sec:proof_ell_general}, we meditate about it in \Cref{sec:meditation} and provide the proof for two linkage classes in \Cref{sec:proof_ell_2}. Finally, in \Cref{sec:app_birch,sec:proof_yi_exp_minusyiminus1,sec:acyclic_digraph}, we provide some details about a tool that is used in \Cref{sec:prelim_towards_main_result}, prove one of the lemmata of \Cref{sec:proof_ell_1}, and display the dependence of the numbered statements appearing in this paper via an acyclic digraph, respectively.

\section{Notations}
\label{sec:notations}

We use standard notations.

For two vector spaces $U$ and $V$, the notation $U \leq V$ expresses that $U$ is a subspace of $V$. For a subspace $U \leq \mathbb{R}^m$, the map $\Pi_U : \mathbb{R}^m \to \mathbb{R}^m$ is the orthogonal projection to $U$. For a subspace $U \leq \mathbb{R}^m$, the symbol $U^\perp$ denotes the orthogonal complement of $U$.

For a linear map (or its matrix) $A$, we denote by $\ker A$, $\ran A$, $\operatorname{rank}A$, and $A^\top$ its kernel, range, rank, and transpose, respectively.

For a vector $z \in \mathbb{R}^m$, we denote by $|z|$ and $\max(z)$ its Euclidean norm and the value of the maximal entry of $z$, respectively. For two vectors $z_1, z_2 \in \mathbb{R}^m$, the expression $\langle z_1, z_2 \rangle$ denotes their Euclidean scalar product.

For a finite set $Q$, we denote by $|Q|$ the number of its elements.

The element of $\mathbb{R}^m$ with all its coordinates being $1$ is denoted by $\mathbb{1}_m$.

For a set $\Omega \subseteq \mathbb{R}^m$, we denote by $\partial \Omega$ the boundary of $\Omega$.

The symbol $\mathbb{R}_+$ denotes the set of positive real numbers, i.e., $\mathbb{R}_+ = \{x \in \mathbb{R} ~|~ x > 0\}$.

For a vector $x \in \mathbb{R}^n_+$ and a matrix $Y \in \mathbb{R}^{n \times m}$, the vector $x^Y \in \mathbb{R}_+^m$ is defined by $(x^Y)_j = \prod_{i=1}^n x_i^{Y_{ij}}$ for $j \in \{1,\ldots,m\}$.

\section{Mass-action systems}
\label{sec:mass_action_systems}

We give a very brief introduction to the basic notions of CRNT. For more details, the reader is advised to consult e.g. \cite{feinberg:1979}, \cite{feinberg:1987}, and \cite{gunawardena:2003}.

A \emph{reaction network} is a triple $(\mathcal{X},\mathcal{C},\mathcal{R})$, where $\mathcal{X}$, $\mathcal{C}$, and $\mathcal{R}$ are the set of \emph{species}, \emph{complexes}, and \emph{reactions}, respectively. Throughout the paper, we use $n = |\mathcal{X}|$ and $m = |\mathcal{C}|$. The complexes are formal linear combinations of the species, the coefficients are stored in the matrix $Y \in \mathbb{R}^{n \times m}$. The $i$th complex is then $Y_{1i}\mathsf{X}_1 +\cdots + Y_{ni}\mathsf{X}_n$, where $\mathsf{X}_1, \ldots,\mathsf{X}_n$ denote the species. The set $\mathcal{R}$ consists of ordered pairs of complexes, the first and the second element of the pair are called \emph{reactant} complex and \emph{product} complex, respectively. The reactant and the product complex of a reaction are distinct.

The weak components of the digraph $(\mathcal{C},\mathcal{R})$ are called \emph{linkage classes}. The number of linkage classes is denoted by $\ell$. The reaction network is said to be \emph{weakly reversible} if all the weak components of the digraph $(\mathcal{C},\mathcal{R})$ are strongly connected, i.e., for every pair $(i,j)$ of complexes, the existence of a directed path from $i$ to $j$ implies the existence of a directed path from $j$ to $i$.

The \emph{incidence matrix} of the digraph $(\mathcal{C},\mathcal{R})$ is denoted by $I$, while its range is by $\mathcal{I}$. (Each column $v$ of $I$ corresponds to a reaction, has exactly two nonzero entries, $v_i = -1$ (respectively, $v_i = 1$) if the $i$th complex is the reactant (respectively, product) complex of the reaction in question.) Elementary considerations show that
\begin{align*}
\mathcal{I} = \left\{z \in \mathbb{R}^m ~\Big|~ \sum_{i \in \mathcal{C}(j)}z_i=0 \text{ for all } j \in \{1,\ldots,\ell\}\right\},
\end{align*}
where $\mathcal{C}(j)$ denotes the set of those complexes that belong to the $j$th linkage class. In \Cref{sec:meditation,sec:proof_ell_2,sec:proof_ell_general}, we will use the notation $m_j = |\mathcal{C}(j)|$.

Denoting by $x(\tau) \in \mathbb{R}^n_+$ the \emph{concentration vector} of the species at time $\tau$, assuming \emph{mass-action kinetics}, the time evolution of the species concentration vector is described by the autonomous ordinary differential equation (ODE)
\begin{align} \label{eq:ODE}
\dot{x}(\tau) = Y A_\kappa x(\tau)^Y \text{ with state space } \mathbb{R}^n_+,
\end{align}
where $\kappa : \mathcal{R} \to \mathbb{R}_+$ and the matrix $A_\kappa \in \mathbb{R}^{m \times m}$ is the \emph{Laplacian} of the labelled digraph $(\mathcal{C},\mathcal{R},\kappa)$. Namely,
\begin{align*}
A_\kappa =
\begin{bmatrix}
\kappa_{11} & \cdots & \kappa_{m1} \\
\vdots      & \ddots & \vdots      \\
\kappa_{1m} & \cdots & \kappa_{mm}
\end{bmatrix}-
\begin{bmatrix}
\sum_{i=1}^m\kappa_{1i} &        & 0                      \\
                        & \ddots &                        \\
0                       &        & \sum_{i=1}^m\kappa_{mi}
\end{bmatrix},
\end{align*}
where we implicitly set $\kappa_{ij}=0$ for $(i,j) \notin \mathcal{R}$. (In other situations, one might define the Laplacian as $A_\kappa^\top$, $-A_\kappa$, or $-A_\kappa^\top$, but the natural definition of the Laplacian in the field of CRNT is the one we gave.) The reason we defined the ODE in the positive orthant (not in the nonnegative orthant) is that we allow negative entries in $Y$. The quadraple $(\mathcal{X},\mathcal{C},\mathcal{R},\kappa)$ is called a \emph{mass-action system}.

The main object we are interested in in this paper is the set of \emph{positive steady states} $E_+$, defined by
\begin{align*}
E_+ = \{x \in \mathbb{R}^n_+~|~YA_\kappa x^Y=0\}.
\end{align*}

The matrix $S=YI$ is called the \emph{stoichiometric matrix}. In general, $\ran A_\kappa \leq \mathcal{I}$. (There are several ways to show that weak reversibility implies $\ran A_\kappa = \mathcal{I}$, see e.g. \cite[Appendix]{feinberg:horn:1977}, \cite[Corollary 4.6]{feinberg:1979}, \cite[Lemma V.2]{sontag:2001}, and \cite[Corollary 2.8]{boros:2013c}. In particular, assuming weak reversibility, $\ran A_\kappa$ is independent of $\kappa$.) Thus, the translations of $\ran S$ are forward invariant under the ODE \eqref{eq:ODE}. We call the sets $(p+\ran S) \cap \mathbb{R}^n_+$ for $p \in \mathbb{R}^n_+$ \emph{positive stoichiometric classes}. The relevant object to study is not $E_+$, but $E_+ \cap \mathcal{P}$, where $\mathcal{P}$ is a fixed positive stoichiometric class.

\section{Main result}
\label{sec:main_result}

The main result of this paper is the following theorem.

\begin{theorem}\label{thm:existence_in_each_class}
Let $(\mathcal{X}, \mathcal{C}, \mathcal{R}, \kappa)$ be a weakly reversible mass-action system and let $\mathcal{P}$ be a positive stoichiometric class. Then $E_+\cap\mathcal{P}\neq\emptyset$.
\end{theorem}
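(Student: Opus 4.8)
The plan is to translate the species-space problem into the complex-space as Deng, Feinberg, Jones, and Nachman do, and then apply Brouwer's Fixed Point Theorem. First I would fix the positive stoichiometric class $\mathcal{P} = (p + \ran S) \cap \mathbb{R}^n_+$ and observe that, since weak reversibility gives $\ran A_\kappa = \mathcal{I}$, finding $x \in \mathcal{P}$ with $Y A_\kappa x^Y = 0$ amounts to finding $x \in \mathcal{P}$ with $A_\kappa x^Y \in \ker Y \cap \mathcal{I}$. The key reparametrisation is to write a prospective steady state as $x = p^{\phantom{Y}}\!\ast e^{\eta}$ (coordinatewise) with $\eta \in (\ran S)^\perp{}^{\perp} = \ran S$ — more precisely, restricting $\log x$ to lie in the affine space $\log p + \ran S^\top$-type constraints so that $x$ automatically stays in $\mathcal{P}$ up to the boundary issue. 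Then $x^Y$ becomes an explicit exponential (log-linear) function of the free parameter, and one studies the map that sends this parameter to $\Pi_{\mathcal{I}}\big(A_\kappa x^Y\big)$ or, better, to a suitably normalised vector in the complex space whose zero corresponds to a steady state.

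The main construction: I would build a continuous self-map $F$ of a compact convex set (a ball, or, following the paper's hint in \Cref{sec:proof_ell_2,sec:proof_ell_general}, a modified ball) in complex-space — roughly, the set of admissible "complex-balance-like" vectors — whose fixed points, pulled back through the exponential parametrisation, yield elements of $E_+ \cap \mathcal{P}$. The single-linkage-class case should go through cleanly: here $\mathcal{I}$ has codimension one, the relevant map has good sign behaviour on the boundary of a ball (this is where the result quoted as \cref{lemma:Gzz_neg} — negativity of a certain quadratic/Jacobian-type form — enters, guaranteeing the map points inward on $\partial\Omega$), and Brouwer applies directly. The heart of the matter is ensuring that as the free parameter $\eta$ runs off to infinity in $\ran S$, the corresponding $x = p \ast e^\eta$ leaves every compact subset of $\mathcal{P}$ in a controlled way, so that the vector field $Y A_\kappa x^Y$ has no zeros "escaping to the boundary"; equivalently, one needs an a priori bound confining the fixed-point search to a compact region. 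I expect to need the lemma on the asymptotics of $y_i e^{y_i} - 1$-type expressions (flagged for \Cref{sec:proof_yi_exp_minusyiminus1}) and the structure of acyclic digraphs (\Cref{sec:acyclic_digraph}) precisely to handle the directions in which the parametrisation degenerates.

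For the multiple-linkage-class case ($\ell \geq 2$), the difficulty is that $\mathcal{I}$ decomposes as a direct sum over linkage classes, so the single ball must be replaced by a product-like region, and the inward-pointing/degree argument has to be run on a set whose boundary is more complicated — a genuine ball no longer works, which is why the paper inserts \Cref{sec:meditation}. My plan is to proceed inductively or by a careful deformation argument: handle $\ell = 2$ by hand on a modified ball (shrinking or truncating the region in the directions corresponding to the "transfer" of mass between linkage classes), and then set up the general $\ell$ argument so that the extra linkage classes are absorbed one at a time or handled simultaneously via a homotopy to the single-linkage-class situation, invoking the degree-theoretic version of Brouwer to keep the count of zeros from vanishing.

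The step I expect to be the main obstacle is exactly this passage from $\ell \le 2$ to general $\ell$: constructing the correct compact convex (or at least Brouwer-amenable) domain in complex-space on which the relevant map is a self-map, and proving the boundary condition there. This requires understanding how the $\ell$ independent "conservation" constraints coming from $\mathcal{I} = \{z : \sum_{i \in \mathcal{C}(j)} z_i = 0 \text{ for all } j\}$ interact with the single set of stoichiometric constraints defining $\mathcal{P}$ — the mismatch between $\ell$ (number of linkage classes) and the geometry of $\ran S$ is what makes a naive ball fail, and resolving it cleanly is the crux of the paper.
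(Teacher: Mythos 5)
Your overall route---translating the problem into the space of complexes and applying Brouwer's theorem to a ball for $\ell=1$ and to a modified ball in general---is the same as the paper's, and your $\ell=1$ outline (negativity of $\langle G(z),z\rangle$ on large spheres in $\mathcal{I}$) is essentially right. But two points in your setup are off. First, the parametrisation: $x=p\ast e^{\eta}$ with $\eta\in\ran S$ does \emph{not} stay in $\mathcal{P}$, which is an additive translate $(p+\ran S)\cap\mathbb{R}^n_+$, not a multiplicative one. The correct device is Birch's theorem (\cref{lemma:psi,lemma:birch}), which gives $Y^\top\log\mathcal{P}=\{z+F(z)\mid z\in K\}$ with $K=\Pi_{\mathcal{I}}(\ran Y^\top)$ and a continuous but otherwise uncontrolled $F:K\to\mathcal{I}^\perp$. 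This $F$ is precisely what defeats the naive ball when $\ell\geq2$: $F(z)$ is constant on each linkage class, so $\langle\widehat{G}(z),z\rangle=\sum_i e^{F_i(z)}\,z(i)^\top A_\kappa(i)e^{z(i)}$, and the factors $e^{F_i(z)}$ can make a positive term coming from a block with small $|z(i)|$ dominate no matter how large $|z|$ is (with $F\equiv0$ a ball would work for every $\ell$). Your diagnosis in terms of a mismatch between $\ell$ and the geometry of $\ran S$ misses this mechanism; also, \Cref{sec:acyclic_digraph} is merely a dependency chart of the paper's statements, not a combinatorial tool available for the proof, and the elementary lemma concerns sums $\sum_i e^{-y_{i-1}}y_i$ along directed paths, which enters only through the $\ell=1$ estimate.

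The genuine gap is the case $\ell\geq2$, which you yourself flag as the main obstacle: induction, deformation, or degree theory is named, but no domain is constructed and no boundary estimate is proved, so the argument stops exactly where the difficulty begins. What is actually needed (and what the paper does in \cref{lemma:Omega_is_good_ell_general}) is a single explicit truncated ball: for a subspace $H\leq\mathcal{I}$ set $H_Q=H\cap\ker\Pi_{Q^c}$ and $z_Q=\Pi_{H_Q}z$, and take $\Omega=\bigl\{z\in H \mid |z_Q|\leq\sqrt{r_\ell^2-r_{\ell-|Q|}^2}\ \text{for all}\ \emptyset\neq Q\subseteq\{1,\dots,\ell\}\bigr\}$, with $0=r_0<r_1<\dots<r_\ell$ fixed one at a time, starting from the constraint for $Q=\{1,\dots,\ell\}$ and ending with the singletons. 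On the face $|z_Q|=\sqrt{r_\ell^2-r_{\ell-|Q|}^2}$ the outer normal is $z_Q$, and $\langle\widehat{G}(z),z_Q\rangle$ splits into single-linkage-class terms $e^{F_i(z)}z_Q(i)^\top A_\kappa(i)e^{z_Q(i)+y(i)}$ whose perturbation $y(i)$ has norm at most $r_{\ell-|Q|}$; negativity then follows from the $\ell=1$ estimate made uniform under bounded exponent shifts (\cref{cor:Gzz_neg_error_rho}), together with the comparability bound of \cref{lemma:H1_H2_V_general} which guarantees each block $|z_Q(i)|$ is large on that face. No induction on $\ell$, homotopy, or degree count is needed---and a homotopy to the single-linkage-class situation would be delicate in any case, because the continuous $F$ supplied by Birch's theorem is arbitrary and is not preserved under any natural deformation. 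Until you supply such a compact convex set and verify the inward-pointing condition on all of its faces, your proposal proves the theorem only for $\ell=1$.
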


The way we prove \cref{thm:existence_in_each_class} is the following. In \Cref{sec:prelim_towards_main_result}, we arrive (via a series of lemmata) to \cref{thm:Epl_cap_P_hatGK_cap_Kperp}. This latter theorem provides an equivalent formulation to $E_+\cap\mathcal{P}\neq\emptyset$ (for an arbitrary mass-action system) in terms of an intersection problem in $\mathbb{R}^m$ (note that the original problem is in $\mathbb{R}^n_+$). Finally, \cref{thm:intersection_for_all_subspaces_and_all_F} states that weak reversibility is sufficient to the solvability of this intersection problem. The proof of this latter theorem under the extra assumptions $\ell = 1$, $\ell=2$, and $\ell\geq1$ are carried out in \Cref{sec:proof_ell_1,sec:proof_ell_2,sec:proof_ell_general}, respectively. Both of \Cref{sec:proof_ell_2,sec:proof_ell_general} build heavily on \Cref{sec:proof_ell_1}, however, \Cref{sec:proof_ell_2,sec:proof_ell_general} are independent of each other. The reason we spell out the proof of the case $\ell = 2$ is that it is a nice warm up for the case of arbitrary $\ell$ (the latter is slightly more abstract than other parts of this paper).

The \emph{Permanence Conjecture} states that for each weakly reversible mass-action system and each positive stoichiometric class $\mathcal{P}$ there exists a compact subset $K$ of $\mathcal{P}$ such that $K$ is forward invariant and is a global attractor when the dynamics is restricted to $\mathcal{P}$, see \cite{craciun:nazarov:pantea:2013} and \cite{gopalkrishnan:miller:shiu:2014}. Once the permanence conjecture is proved, \cref{thm:existence_in_each_class} follows immediately.

\section{Preliminary steps towards proving the main result}
\label{sec:prelim_towards_main_result}

In this section, we start analysing the question of the non-emptiness of the set $E_+ \cap \mathcal{P}$.

The set $E_+\cap\mathcal{P}$ lies in $\mathbb{R}^n_+$. The following lemma translates the question of its non-emptiness to an intersection problem in $\mathbb{R}^m$ of two manifolds, one linear and one nonlinear. The following definition will be used throughout this paper. Let us define the function $G : \mathbb{R}^m \to \mathbb{R}^m$ by
\begin{align*}
G(z) = A_\kappa e^z \text{ for } z \in \mathbb{R}^m.
\end{align*}

\begin{lemma} \label{lemma:GYTlogP_ranYTperp}
Let $(\mathcal{X}, \mathcal{C}, \mathcal{R}, \kappa)$ be a mass-action system and let $\mathcal{P}$ be a positive stoichiometric class. Then
\begin{align*}
E_+\cap\mathcal{P}\neq\emptyset \text{ if and only if } G(Y^\top\log\mathcal{P})\cap (\ran Y^\top)^\perp\neq\emptyset.
\end{align*}
\end{lemma}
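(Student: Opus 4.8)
The plan is to unwind all the definitions and show that a positive steady state in $\mathcal{P}$ corresponds, under the coordinate change $x \mapsto Y^\top \log x$, to a point of $G(Y^\top\log\mathcal{P})$ lying in $(\ran Y^\top)^\perp = \ker Y$. First I would recall that $x \in E_+\cap\mathcal{P}$ means $x \in \mathbb{R}^n_+$, $YA_\kappa x^Y = 0$, and $x \in \mathcal{P}$. Writing $w = \log x \in \mathbb{R}^n$ (coordinatewise), note $x^Y = e^{Y^\top w}$, since $(x^Y)_j = \prod_i x_i^{Y_{ij}} = \prod_i e^{w_i Y_{ij}} = e^{(Y^\top w)_j}$. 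Hence setting $z = Y^\top w \in Y^\top\log\mathcal{P}$ (here $\log\mathcal{P} := \{\log x : x \in \mathcal{P}\}$, interpreted coordinatewise), we get $A_\kappa x^Y = A_\kappa e^z = G(z)$. So the steady-state condition becomes $Y G(z) = 0$, i.e. $G(z) \in \ker Y$.

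The two things to check are then: (i) $\ker Y = (\ran Y^\top)^\perp$, which is the standard linear-algebra identity (the orthogonal complement of the row space of a matrix is its kernel); and (ii) that the correspondence $x \mapsto z = Y^\top\log x$ sets up the equivalence in both directions. For the forward direction, if $x \in E_+\cap\mathcal{P}$, take $z = Y^\top\log x$; then $z \in Y^\top\log\mathcal{P}$ by definition, and $G(z) = A_\kappa x^Y \in \ker Y = (\ran Y^\top)^\perp$, giving a point in the intersection. For the reverse direction, suppose $z \in Y^\top\log\mathcal{P}$ with $G(z) \in (\ran Y^\top)^\perp$; by definition of $Y^\top\log\mathcal{P}$ there is some $x \in \mathcal{P} \subseteq \mathbb{R}^n_+$ with $z = Y^\top\log x$, and then reversing the computation $YA_\kappa x^Y = YG(z) = 0$, so $x \in E_+$; since $x \in \mathcal{P}$ already, $x \in E_+\cap\mathcal{P}$.

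The one subtle point — and the only place where anything beyond bookkeeping happens — is that the reverse direction requires that every element of $Y^\top\log\mathcal{P}$ actually be of the form $Y^\top\log x$ for some $x \in \mathcal{P}$, which is immediate from the definition of the set $Y^\top\log\mathcal{P}$ as the image of $\mathcal{P}$ under the map $x \mapsto Y^\top\log x$; there is no need for this map to be injective. So there is essentially no obstacle here: the lemma is a change-of-variables restatement, and the proof is a short chain of equivalences. I would present it as: $x \in E_+\cap\mathcal{P} \iff x\in\mathbb{R}^n_+,\ x\in\mathcal{P},\ YA_\kappa e^{Y^\top\log x} = 0 \iff z := Y^\top\log x \in Y^\top\log\mathcal{P}$ and $Y G(z) = 0 \iff G(z) \in (\ran Y^\top)^\perp$ for some $z \in Y^\top\log\mathcal{P}$, taking care to state explicitly that $\max$-type issues do not arise because all concentrations are positive, so $\log x$ is well-defined and finite.
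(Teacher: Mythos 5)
Your proposal is correct and follows essentially the same route as the paper's own proof: both unwind the definition of a positive steady state, use the identity $x^Y = e^{Y^\top\log x}$ to pass from $x\in\mathcal{P}$ to $z=Y^\top\log x\in Y^\top\log\mathcal{P}$, and invoke $\ker Y = (\ran Y^\top)^\perp$ to finish. The extra care you take in spelling out the reverse direction (surjectivity onto the image set, no injectivity needed) is fine but does not change the argument.
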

\begin{proof}
By definition, $x \in E_+\cap\mathcal{P}$ if and only if $x \in \mathcal{P}$ and $A_\kappa x^Y \in \ker Y$. The observations $x^Y = e^{Y^\top\log x}$ and $\ker Y = (\ran Y^\top)^\perp$ conclude the proof.
\end{proof}

As \cref{lemma:GYTlogP_ranYTperp} suggests, we investigate the set $Y^\top\log\mathcal{P}$. Let us define the subspace $K \leq \mathbb{R}^m$ by $K = \Pi_\mathcal{I}(\ran Y^\top)$. This notation will be used only in \cref{lemma:use_birch_in_proof,lemma:hatGK_cap_Kperp}, \cref{thm:Epl_cap_P_hatGK_cap_Kperp}, and \cref{sec:app_birch}.

\begin{lemma} \label{lemma:use_birch_in_proof}
Let $(\mathcal{X},\mathcal{C},\mathcal{R})$ be a reaction network and let $\mathcal{P}$ be a positive stoichiometric class. Then there exists a unique function $F : K \to \mathcal{I}^\perp$ such that
\begin{align}\label{eq:monomial_parametrization}
Y^\top\log\mathcal{P} = \{z+F(z) ~|~ z \in K\}.
\end{align}
Moreover, $F$ is continuous.
\end{lemma}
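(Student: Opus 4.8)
The plan is to exploit the fact that $Y^\top\log\mathcal{P}$ is an affine translate of $\ran Y^\top$ and to decompose $\ran Y^\top$ as an (in general non-orthogonal) direct sum of $K$ and a complement lying "along" $\mathcal{I}^\perp$. First I would fix a reference point: choose $p\in\mathbb{R}^n_+$ with $\mathcal{P}=(p+\ran S)\cap\mathbb{R}^n_+$, so that $\log\mathcal{P}\subseteq\log p+\log\big((\mathbb{1}_n+\ran S/|\cdot|)\cap\ldots\big)$ — more cleanly, every $x\in\mathcal{P}$ satisfies $Y^\top\log x=Y^\top\log p+Y^\top(\log x-\log p)$, and as $x$ ranges over $\mathcal{P}$ the vectors $\log x-\log p$ range over an open connected subset of $\ran S^\top$... but in fact the cleanest route is via the Birch-type result invoked in \Cref{sec:app_birch}: the set $Y^\top\log\mathcal{P}$ is (a translate of) the image of a monomial parametrization, and the classical Birch theorem (Horn–Jackson / Birch) guarantees that $Y^\top\log\mathcal{P}$ meets each translate of $(\ran Y^\top)^\perp$-related affine subspace in exactly one point. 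Concretely, I would show that the restriction of the projection $\Pi_\mathcal{I}$ to the affine set $Y^\top\log\mathcal{P}$ is a bijection onto $K$.

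The key steps, in order: (1) Observe $Y^\top\log\mathcal{P}$ is an affine subset of $\mathbb{R}^m$ whose direction space is $\ran Y^\top$ intersected appropriately — more precisely, argue that $Y^\top\log\mathcal{P}$ is a smooth $\dim K$-dimensional submanifold (the log-image of a positive stoichiometric class under the linear map $Y^\top$). (2) Show $\Pi_\mathcal{I}|_{Y^\top\log\mathcal{P}}$ is injective: if $z_1+w_1,z_2+w_2$ lie in $Y^\top\log\mathcal{P}$ with $z_i\in K$, $w_i\in\mathcal{I}^\perp$ (using $\mathbb{R}^m=\mathcal{I}\oplus\mathcal{I}^\perp$ and $K\leq\mathcal{I}$) and $z_1=z_2$, then $w_1-w_2\in\ran Y^\top\cap\mathcal{I}^\perp$; combine this with the strict convexity / monotonicity underlying Birch's theorem to force $w_1=w_2$. (3) Show surjectivity onto $K$: this is where Birch's theorem does the real work — for each $z\in K$ there is a point of $Y^\top\log\mathcal{P}$ projecting to it, because $Y^\top\log\mathcal{P}$ is unbounded in the right directions and the relevant Gale-type / properness argument applies. (4) Define $F(z)$ to be the $\mathcal{I}^\perp$-component of the unique preimage, and check $F$ maps into $\mathcal{I}^\perp$ by construction and that \eqref{eq:monomial_parametrization} holds. (5) Continuity of $F$: since $\Pi_\mathcal{I}|_{Y^\top\log\mathcal{P}}$ is a continuous bijection between the manifold $Y^\top\log\mathcal{P}$ and $K$, and one can show it is proper (preimages of compacts are compact, again via the Birch/Gale estimates controlling blow-up at the boundary), its inverse is continuous; then $F=\Pi_{\mathcal{I}^\perp}\circ(\Pi_\mathcal{I}|_{Y^\top\log\mathcal{P}})^{-1}$ is continuous as a composition.

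The main obstacle is step (3) together with the properness needed in step (5): establishing that every $z\in K$ is actually attained, not just that the projection is a local diffeomorphism. This is exactly the content of the Birch-theorem machinery deferred to \Cref{sec:app_birch}, so in the body I would phrase this lemma as a consequence of the statement proved there, namely that the map $\ran S^\top\to K$ given by $v\mapsto \Pi_\mathcal{I}(Y^\top(\log p+v)+\text{correction})$... — in other words, the honest version is: apply the appendix's Birch-type proposition to the matrix $Y$, the subspace $\ran Y^\top$, and the basepoint $Y^\top\log p$, which directly yields the unique continuous $F:K\to\mathcal{I}^\perp$ with \eqref{eq:monomial_parametrization}. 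Everything else (the $\mathcal{I}\oplus\mathcal{I}^\perp$ bookkeeping, $K\leq\mathcal{I}$, and assembling $F$ from the components) is routine linear algebra.
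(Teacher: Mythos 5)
Your final (``honest'') version is essentially the paper's proof: the paper deduces the lemma from the appendix's Birch-type result (\cref{lemma:psi}), i.e.\ that $\Psi(x)=\Pi_\mathcal{I}Y^\top\log x$ is a bijection (in fact a homeomorphism) from $\mathcal{P}$ onto $K$, and then reads off existence, uniqueness, and continuity of $F$ exactly as in your steps (2)--(5). Just drop the retracted opening claims ($Y^\top\log\mathcal{P}$ is not an affine translate of $\ran Y^\top$, and $\log x-\log p$ does not lie in a fixed subspace); they play no role in the argument you actually use.
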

\begin{proof}
By \cref{lemma:psi} in \cref{sec:app_birch}, the map $\Psi:\mathcal{P}\to K$, defined by $\Psi(x) = \Pi_\mathcal{I} Y^\top \log(x)$ is a bijection between $\mathcal{P}$ and $K$. Clearly, it is even a homeomorphism.

Since $\Psi$ is surjective, for all $z \in K$ there exists a $z'\in\mathcal{I}^\perp$ such that $z+z' \in Y^\top\log\mathcal{P}$. Since $\Psi$ is injective, such a $z'$ is unique. Thus, there exists a unique function $F : K \to \mathcal{I}^\perp$ such that \eqref{eq:monomial_parametrization} holds. Finally, since $F : K \to \mathcal{I}^\perp$ is actually defined by $F(z) = \Pi_{\mathcal{I}^\perp}Y^\top \log(\Psi^{-1}(z))$, it is continuous.
\end{proof}

Based on \cref{lemma:use_birch_in_proof}, we give another form of the set $G(Y^\top\log\mathcal{P}) \cap (\ran Y^\top)^\perp$.

\begin{lemma} \label{lemma:hatGK_cap_Kperp}
Let $(\mathcal{X},\mathcal{C},\mathcal{R},\kappa)$ be a mass-action system and let $\mathcal{P}$ be a positive stoichiometric class. Further, let $F : K \to \mathcal{I}^\perp$ be as in \cref{lemma:use_birch_in_proof}. Then
\begin{align*}
G(Y^\top\log\mathcal{P}) \cap (\ran Y^\top)^\perp = \widehat{G}(K) \cap K^\perp,
\end{align*}
where $\widehat{G}:K\to\mathbb{R}^m$ is defined by $\widehat{G} = G\circ(\operatorname{Id}+F)$.
\end{lemma}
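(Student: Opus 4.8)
The plan is to show that the two sets coincide by establishing a bijective correspondence between the relevant parametrizations and then checking that the ``intersect $(\ran Y^\top)^\perp$'' condition matches the ``intersect $K^\perp$'' condition. The starting point is \cref{lemma:use_birch_in_proof}, which gives
\begin{align*}
Y^\top\log\mathcal{P} = \{z + F(z) ~|~ z \in K\},
\end{align*}
so applying $G$ and using the definition $\widehat{G} = G\circ(\operatorname{Id}+F)$, we immediately get
\begin{align*}
G(Y^\top\log\mathcal{P}) = \{G(z+F(z)) ~|~ z \in K\} = \{\widehat{G}(z) ~|~ z \in K\} = \widehat{G}(K).
\end{align*}
This disposes of the ``$G(Y^\top\log\mathcal{P}) = \widehat{G}(K)$'' half of the identity with essentially no work.

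The remaining point is that intersecting with $(\ran Y^\top)^\perp$ gives the same result as intersecting with $K^\perp$, i.e.\ that for every point $w \in \widehat{G}(K)$ we have $w \in (\ran Y^\top)^\perp$ iff $w \in K^\perp$. First I would recall that $\ran A_\kappa \leq \mathcal{I}$ (stated in the excerpt), so every element of $\widehat{G}(K)$, being of the form $A_\kappa e^{z+F(z)}$, lies in $\mathcal{I}$. Thus it suffices to prove that, \emph{for vectors already lying in $\mathcal{I}$}, membership in $(\ran Y^\top)^\perp$ is equivalent to membership in $K^\perp$. Now $K = \Pi_\mathcal{I}(\ran Y^\top)$, and the key linear-algebra fact is that for any subspace $U \leq \mathbb{R}^m$ and any $w \in \mathcal{I}$ one has $\langle w, u\rangle = \langle w, \Pi_\mathcal{I} u\rangle$ (because $w \perp \mathcal{I}^\perp$ and $u - \Pi_\mathcal{I} u \in \mathcal{I}^\perp$). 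Applying this with $U = \ran Y^\top$: for $w \in \mathcal{I}$, $w \perp \ran Y^\top$ iff $\langle w, u\rangle = 0$ for all $u \in \ran Y^\top$ iff $\langle w, \Pi_\mathcal{I} u\rangle = 0$ for all such $u$ iff $w \perp K$. Hence $\mathcal{I} \cap (\ran Y^\top)^\perp = \mathcal{I} \cap K^\perp$, and since $\widehat{G}(K) \subseteq \mathcal{I}$, intersecting $\widehat{G}(K)$ with either orthogonal complement yields the same set.

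The main (and essentially only) obstacle is the bookkeeping with orthogonal projections: one must be careful that $K$ is defined as the projection of $\ran Y^\top$ onto $\mathcal{I}$, not $\ran Y^\top$ itself, and that the equivalence of the two intersection conditions genuinely requires the ambient constraint $\widehat{G}(K) \subseteq \mathcal{I}$, which in turn relies on $\ran A_\kappa \leq \mathcal{I}$. Once these two observations are in place, the proof is a short computation; no fixed-point theory or weak reversibility is needed at this stage.
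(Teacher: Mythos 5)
Your proof is correct and follows essentially the same route as the paper: the identity $G(Y^\top\log\mathcal{P})=\widehat{G}(K)$ comes straight from \cref{lemma:use_birch_in_proof}, and the replacement of $(\ran Y^\top)^\perp$ by $K^\perp$ rests on the same linear-algebra observation (for vectors in $U=\mathcal{I}$, orthogonality to $V=\ran Y^\top$ is equivalent to orthogonality to $\Pi_U V=K$). The only difference is presentational: you make explicit the containment $\widehat{G}(K)\subseteq\mathcal{I}$ via $\ran A_\kappa\leq\mathcal{I}$, which the paper's proof uses implicitly when applying that observation.
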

\begin{proof}
Note that for any two subspaces $U$ and $V$ and for any element $u \in U$, we have $u \in V^\perp$ if and only if $u \in (\Pi_UV)^\perp$. Thus, with $U = \mathcal{I}$ and $V = \ran Y^\top$, we have
\begin{align*}
G(Y^\top\log\mathcal{P}) \cap (\ran Y^\top)^\perp = G(Y^\top\log\mathcal{P}) \cap K^\perp.
\end{align*}
By \cref{lemma:use_birch_in_proof}, the latter intersection equals to $\widehat{G}(K) \cap K^\perp$.
\end{proof}

\cref{thm:Epl_cap_P_hatGK_cap_Kperp} below is an immediate consequence of the \cref{lemma:GYTlogP_ranYTperp,lemma:hatGK_cap_Kperp}. It provides an equivalent condition to the non-emptiness of $E_+ \cap \mathcal{P}$ in terms of an intersection problem in $\mathbb{R}^m$. The highly nontrivial \cref{thm:intersection_for_all_subspaces_and_all_F} below states that this intersection problem, under weak reversibility, is always solvable.

\begin{theorem} \label{thm:Epl_cap_P_hatGK_cap_Kperp}
Let $(\mathcal{X},\mathcal{C},\mathcal{R},\kappa)$ be a mass-action system and let $\mathcal{P}$ be a positive stoichiometric class. Further, let $F : K \to \mathcal{I}^\perp$ be as in \cref{lemma:use_birch_in_proof}. Then
\begin{align*}
E_+\cap\mathcal{P} \neq \emptyset \text{ if and only if } \widehat{G}(K) \cap K^\perp\neq \emptyset,
\end{align*}
where $\widehat{G}:K\to\mathbb{R}^m$ is defined by $\widehat{G} = G\circ(\operatorname{Id}+F)$.
\end{theorem}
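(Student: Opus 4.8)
The statement to be proved, \cref{thm:Epl_cap_P_hatGK_cap_Kperp}, is claimed to be an \emph{immediate consequence} of \cref{lemma:GYTlogP_ranYTperp} and \cref{lemma:hatGK_cap_Kperp}, so the plan is simply to chain those two equivalences together. First I would invoke \cref{lemma:GYTlogP_ranYTperp}, which is applicable since $(\mathcal{X},\mathcal{C},\mathcal{R},\kappa)$ is a mass-action system and $\mathcal{P}$ is a positive stoichiometric class; it gives
\begin{align*}
E_+\cap\mathcal{P}\neq\emptyset \iff G(Y^\top\log\mathcal{P})\cap(\ran Y^\top)^\perp\neq\emptyset.
\end{align*}
Next I would apply \cref{lemma:hatGK_cap_Kperp} to the right-hand side. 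That lemma, with the same hypotheses and with $F:K\to\mathcal{I}^\perp$ the function furnished by \cref{lemma:use_birch_in_proof}, asserts the set equality $G(Y^\top\log\mathcal{P})\cap(\ran Y^\top)^\perp=\widehat{G}(K)\cap K^\perp$, where $\widehat{G}=G\circ(\operatorname{Id}+F)$. Substituting this equality into the previous display yields
\begin{align*}
E_+\cap\mathcal{P}\neq\emptyset \iff \widehat{G}(K)\cap K^\perp\neq\emptyset,
\end{align*}
which is exactly the claim.

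The only thing to be careful about is bookkeeping: I must make sure the object $F$ (and hence $\widehat{G}$) in the theorem statement is literally the one produced by \cref{lemma:use_birch_in_proof} — but the theorem is stated with precisely that convention (``let $F:K\to\mathcal{I}^\perp$ be as in \cref{lemma:use_birch_in_proof}''), so there is nothing to reconcile. I would also note in passing that \cref{lemma:use_birch_in_proof} needs only a reaction network, so its hypothesis is satisfied here, and that \cref{lemma:hatGK_cap_Kperp} already encapsulates the orthogonal-complement manipulation (replacing $(\ran Y^\top)^\perp$ by $K^\perp$ via $K=\Pi_\mathcal{I}(\ran Y^\top)$) together with the monomial reparametrization of $Y^\top\log\mathcal{P}$.

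There is essentially no obstacle here; the mathematical content has all been front-loaded into the preceding lemmata, and this theorem is a one-line composition of two biconditionals. If I wanted to be fully self-contained I could unfold the proofs of \cref{lemma:hatGK_cap_Kperp} and \cref{lemma:use_birch_in_proof} inline, but that would be redundant. The genuinely hard work is deferred to \cref{thm:intersection_for_all_subspaces_and_all_F} and its case analysis in \Cref{sec:proof_ell_1,sec:proof_ell_2,sec:proof_ell_general}; the present theorem's role is purely to repackage $E_+\cap\mathcal{P}\neq\emptyset$ as a clean intersection problem in $\mathbb{R}^m$.
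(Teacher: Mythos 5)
Your proposal is correct and matches the paper exactly: the paper treats this theorem as an immediate consequence of \cref{lemma:GYTlogP_ranYTperp} and \cref{lemma:hatGK_cap_Kperp}, chained precisely as you do, with no further argument needed.
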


\begin{theorem} \label{thm:intersection_for_all_subspaces_and_all_F}
Let $(\mathcal{X}, \mathcal{C}, \mathcal{R}, \kappa)$ be a weakly reversible mass-action system. Let $H$ be an arbitrary subspace of $\mathcal{I}$ and $F:H\to \mathcal{I}^\perp$ be an arbitrary continuous function. Then $\widehat{G}(H)\cap H^\perp\neq\emptyset$, where $\widehat{G}:H\to\mathbb{R}^m$ is defined by $\widehat{G} = G \circ (\operatorname{Id}+F)$.
\end{theorem}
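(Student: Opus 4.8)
The plan is to reformulate \cref{thm:intersection_for_all_subspaces_and_all_F} as the assertion that the continuous vector field $V\colon H\to H$, $V(z)=\Pi_H\widehat G(z)=\Pi_H G(z+F(z))$, has a zero: indeed $V(z^\ast)=0$ says exactly that $\widehat G(z^\ast)\in H^\perp$. To locate a zero we will produce a compact convex body $D\subseteq H$ on whose boundary $V$ points strictly inward, and then invoke a standard consequence of Brouwer's Fixed Point Theorem: a continuous vector field that is inward-pointing on the boundary of a compact convex body must vanish somewhere inside it. Thus everything reduces to controlling, for $z\in\partial D$ and an outward normal $\nu\in H$, the sign of $\langle V(z),\nu\rangle=\langle G(z+F(z)),\nu\rangle$. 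Here both hypotheses enter at once: weak reversibility gives $\ran A_\kappa=\mathcal I$, so $G(z)=A_\kappa e^z\in\mathcal I$ for every $z$, while $F(z)\in\mathcal I^\perp$; consequently $\langle G(z+F(z)),F(z)\rangle=0$, so if $D$ is a ball of $H$ with centre $p\in H$, so that $\nu$ is a positive multiple of $z-p$, then
\begin{align*}
\langle V(z),z-p\rangle=\langle G(z+F(z)),z+F(z)\rangle-\langle G(z+F(z)),p\rangle ,
\end{align*}
and the behaviour of $V$ on spheres of $H$ is governed by the scalar $\langle G(\zeta),\zeta\rangle$ along the curve $\zeta=z+F(z)$.

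The structural input is \cref{lemma:Gzz_neg}. Weak reversibility supplies a positive vector $\rho$ with $A_\kappa\rho=0$, and then $\langle G(\zeta),\zeta-\log\rho\rangle\le0$ for all $\zeta\in\mathbb R^m$, with equality exactly when $G(\zeta)=0$. (Briefly: with $B=A_\kappa\operatorname{diag}(\rho)$ one has $G(\zeta)=Be^{\zeta-\log\rho}$, the matrix $B$ has nonnegative off-diagonal entries and vanishing row and column sums, and the estimate follows from the convexity of the exponential, $e^a(b-a)\le e^b-e^a$, together with $A_\kappa\rho=0$; the equality case, by strict convexity and weak reversibility, forces $\zeta-\log\rho$ to be constant on each linkage class, which is the same as $G(\zeta)=0$.) Combined with the previous display this yields $\langle V(z),z-p\rangle\le\langle G(z+F(z)),\log\rho-p\rangle$, whose right-hand side has absolute value at most a constant times $|G(z+F(z))|$; so the pairing equals the non-positive term $\langle G(z+F(z)),z+F(z)-\log\rho\rangle$ plus a lower-order correction, and it will be negative once the first summand is large enough in absolute value — which is what the choice of $D$ (and, for $\ell\ge2$, of $p$) must achieve.

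For a single linkage class ($\ell=1$), done in \Cref{sec:proof_ell_1}, this closes quickly. Then $\mathcal I^\perp=\operatorname{span}(\mathbb 1_m)$, so $F(z)=f(z)\mathbb 1_m$ for a scalar function $f$, whence $G(z+F(z))=e^{f(z)}G(z)$; therefore $\Pi_H G(z+F(z))$ and $\Pi_H G(z)$ have the same zero set, so $F$ is irrelevant and it suffices to find $z\in H$ with $\Pi_H G(z)=0$. On a ball of $H$ of sufficiently large radius, centred at the point dictated by $\log\rho$, \cref{lemma:Gzz_neg} makes the leftover dominate the lower-order correction, so $V$ points strictly inward on the boundary and Brouwer applies.

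The case $\ell\ge2$ is where the real difficulty lies; it is handled in \Cref{sec:proof_ell_2} for $\ell=2$ (a warm-up) and in \Cref{sec:proof_ell_general} in general. The naive ball argument now breaks down. Since $F(z)\in\mathcal I^\perp$ contributes to $z+F(z)$ an arbitrary additive constant on each linkage class, it rescales $\widehat G(z)=A_\kappa e^{z+F(z)}$ block by block along the linkage classes; exploiting this, one can choose the (still continuous) $F$ so that $z+F(z)\to\infty$ in $H$ while all the exponentials $e^{(z+F(z))_i}\to0$, hence $\widehat G(z)\to0$, so the boundary pairing is not bounded away from $0$ on large spheres and no single ball can have $V$ strictly inward on all of its boundary. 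The remedy is to replace the ball by a compact convex region adapted to the linkage-class decomposition --- a suitably modified, or truncated, ball whose faces absorb precisely these harmless block-rescaling directions --- and to run an induction on the number of linkage classes: freezing the extremal behaviour on one linkage class leaves an instance of \cref{thm:intersection_for_all_subspaces_and_all_F} with fewer linkage classes but a new subspace in place of $H$ and a new continuous map in place of $F$; this is exactly why the theorem is phrased for an arbitrary subspace $H\leq\mathcal I$ and an arbitrary continuous $F$, rather than only for the data produced by \cref{lemma:use_birch_in_proof}. Constructing this region and verifying the boundary sign condition uniformly along the induction is the heart of the proof, and the step I expect to be the main obstacle.
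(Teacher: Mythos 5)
Your overall skeleton --- reduce the statement to finding a zero of $\Pi_H\circ\widehat G$, apply Brouwer's theorem to a compact convex body on whose boundary the field points strictly inwards, dispose of $F$ when $\ell=1$ because it collapses to a scalar multiple of $\mathbb 1_m$, and replace the ball by a truncated body when $\ell\ge2$ --- coincides with the paper's. But two genuine gaps remain. First, your quantitative engine is not in place. The inequality $\langle G(\zeta),\zeta-\log\rho\rangle\le0$ for a positive $\rho$ with $A_\kappa\rho=0$ is true (and is a genuinely different structural input than the paper uses), but it is not what \cref{lemma:Gzz_neg} says, and it is too soft for your purpose: on a sphere $|z-p|=R$ you must beat the correction $\langle G(\zeta),\log\rho-p\rangle$, which is of order $|G(\zeta)|$, and the convexity estimate alone does not show that the nonpositive term dominates a fixed multiple of $|G(\zeta)|$ for large $|z|$. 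What is needed is the strict, quantitative negativity of $z^\top A_\kappa e^{z+w}$ for $z\in\mathcal I$ with $|z|$ large, uniformly over bounded shifts $w$ --- the content of \cref{lemma:Gzz_neg}, \cref{lemma:Gzz_neg_many_kappas} and \cref{cor:Gzz_neg_error_rho}, proved in the paper via the path argument resting on \cref{lemma:yi_exp_minusyiminus1}. You invoke \cref{lemma:Gzz_neg} to supply exactly this dominance, i.e.\ you assume the estimate that constitutes the single-linkage-class part of the proof rather than proving it.

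Second, and decisively, the case $\ell\ge2$ --- the actual substance of this theorem --- is not carried out. You correctly diagnose that the block rescalings $e^{F_i(z)}$ defeat the plain ball, but the remedy is left as ``a suitably modified, or truncated, ball'' plus an induction on $\ell$ in which ``freezing the extremal behaviour on one linkage class'' is to yield an instance of the theorem with fewer linkage classes; no such reduction is exhibited, and it is unclear how it could be, since $H\le\mathcal I$ couples the blocks and every block of $\widehat G(z)$ depends on all of $z$. The paper does not induct: it constructs a single convex body $\Omega=\bigl\{z\in H~\big|~|z_Q|\le\sqrt{r_\ell^2-r_{\ell-|Q|}^2}\ \text{for all}\ \emptyset\neq Q\subseteq\{1,\dots,\ell\}\bigr\}$ with radii $r_1<\dots<r_\ell$ fixed recursively, and on each face $|z_Q|=\sqrt{r_\ell^2-r_{\ell-|Q|}^2}$ it verifies $\langle\widehat G(z),z_Q\rangle<0$ by combining a geometric lemma (\cref{lemma:H1_H2_V_general}: on that face each block component $|z_Q(i)|$ is large while the cross term $y(i)$ is bounded by $r_{k-1}$) with the perturbation-uniform single-class estimate \cref{cor:Gzz_neg_error_rho}. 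These two ingredients --- uniformity in a bounded additive perturbation and the explicit nested truncation indexed by subsets of linkage classes --- are exactly what your sketch lacks, and you yourself flag this step as the expected main obstacle; the proposal stops where the real work begins. (A minor point: your argument that no ball works, namely that $\widehat G(z)\to0$ along suitable boundary points, does not contradict strict inward pointing on a fixed sphere; the genuine obstruction, as in the example of \Cref{sec:meditation}, is that $\langle\widehat G(z),z\rangle$ can be strictly positive at some boundary points of every large sphere.)
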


We conclude this section by meditating on a possible approach one can try to prove the above theorem (and we will indeed follow this way in the upcoming sections). Fix $H \leq \mathcal{I}$. Clearly, $\widehat{G}(H)\cap H^\perp\neq\emptyset$ if and only if $0 \in \Pi_H(\widehat{G}(H))$. Thus, our goal is to show that the map $\Pi_H \circ \widehat{G} : H \to H$ attains $0 \in \mathbb{R}^m$. By Brouwer's Fixed Point Theorem, it suffices to show that there exists an $R > 0$ such that we have $\langle \Pi_H(\widehat{G}(z)), z \rangle < 0$ for all $z \in H$ with $|z| = R$. Since $\langle \Pi_H(\widehat{G}(z)), z \rangle = \langle \widehat{G}(z), z \rangle$ for all $z \in H$, we will investigate the sign of the scalar product $\langle \widehat{G}(z), z \rangle$ for $z \in H$. As it will turn out, the way we just sketched indeed works under the extra assumption $\ell = 1$. To prove \cref{thm:intersection_for_all_subspaces_and_all_F} for arbitrary $\ell$, we have to do a little surgery on the ball $\{z\in H ~|~ |z|=R\}$.

\section[alternative section title for TOC]{Proof of \cref{thm:intersection_for_all_subspaces_and_all_F} under $\ell=1$} \label{sec:proof_ell_1}

In this section, we prove \cref{thm:intersection_for_all_subspaces_and_all_F} under the extra assumption $\ell=1$.

We start by an elementary lemma. Its proof is deferred to \Cref{sec:proof_yi_exp_minusyiminus1}.

\begin{lemma} \label{lemma:yi_exp_minusyiminus1}
Let $M \geq 1$ and $y_0 = 0$. Then
\begin{align*}
\inf_{y_1,\ldots,y_{M-1} \geq 0}\left(\sum_{i=1}^M e^{-y_{i-1}}y_i\right) \to \infty \text{ as } y_M\to\infty.
\end{align*}
\end{lemma}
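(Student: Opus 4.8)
The plan is to prove a quantitative version of the contrapositive: if the sum $\sum_{i=1}^M e^{-y_{i-1}} y_i$ is at most a constant $C$, then $y_M$ is bounded above by a constant depending only on $C$ and $M$. Granting this, fix $C > 0$ and let $t_M(C)$ be the resulting bound; then for any $y_M > t_M(C)$ and any admissible $y_1, \dots, y_{M-1} \ge 0$ we must have $\sum_{i=1}^M e^{-y_{i-1}} y_i > C$, so the infimum over $y_1,\dots,y_{M-1}\ge 0$ is at least $C$. Since $C > 0$ is arbitrary, the infimum tends to $\infty$ as $y_M \to \infty$.

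To build the bound, I would define recursively $t_0 = 0$ and $t_i = C e^{t_{i-1}}$ for $i = 1, \dots, M$, a finite, well-defined sequence. The point is that every summand $e^{-y_{i-1}} y_i$ is nonnegative (using $y_0 = 0$ and $y_1, \dots, y_{M-1}, y_M \ge 0$), so $\sum_{i=1}^M e^{-y_{i-1}} y_i \le C$ forces $e^{-y_{i-1}} y_i \le C$, i.e.\ $y_i \le C e^{y_{i-1}}$, for every $i \in \{1, \dots, M\}$. Starting from $y_0 = 0 = t_0$ and applying this inequality repeatedly gives, by induction on $i$, that $y_i \le t_i$ for $i = 0, 1, \dots, M$; in particular $y_M \le t_M$, and we set $t_M(C) = t_M$.

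I do not anticipate a genuine obstacle: the estimate is a short telescoping induction, and the only feature worth flagging is that $t_M(C)$ is a tower of exponentials of height $M$, so the infimum does tend to infinity but extremely slowly in $y_M$ (for $M = 1$ the claim is trivial, since then the sum equals $y_1 = y_M$ and there are no variables to minimise over). An alternative packaging is the length-recursion $f_k(a) = \inf_{z \ge 0}\bigl(e^{-a} z + f_{k-1}(z)\bigr)$ together with an explicit analysis of the minimiser $z = \log(\,\cdot\,)$, but the direct ``each term is small'' argument above is cleaner and avoids any differentiation.
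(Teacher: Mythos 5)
Your proof is correct, and it takes a genuinely different route from the paper's. The paper argues by induction on $M$: with $y_1,\ldots,y_{M-1}$ and $y_{M+1}$ fixed, the map $y_M \mapsto \sum_{i=1}^{M+1} e^{-y_{i-1}}y_i$ is decreasing up to $y_{M-1}+\log y_{M+1}$ and increasing thereafter, so its minimum over $[0,\infty)$ is attained either at $0$ or at $y_{M-1}+\log y_{M+1}$, and both candidate values are sent to infinity using the induction hypothesis for $M$. You instead prove a quantitative contrapositive: since every summand is nonnegative, $\sum_{i=1}^{M} e^{-y_{i-1}}y_i \le C$ forces $y_i \le C e^{y_{i-1}}$ for each $i$, and iterating from $y_0=0$ gives $y_M \le t_M(C)$ with $t_0=0$, $t_i = Ce^{t_{i-1}}$; hence $y_M > t_M(C)$ makes the infimum at least $C$, and letting $C\to\infty$ gives the claim. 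Your argument is more elementary (no monotonicity analysis of the partial minimisation, no induction on $M$ beyond the trivial bootstrapping in $i$) and it produces an explicit threshold; the tower-of-exponentials size of $t_M(C)$ is essentially unavoidable, since the infimum really grows only like an iterated logarithm of $y_M$. The paper's induction, which is close in spirit to the recursion $f_k(a)=\inf_{z\ge 0}\bigl(e^{-a}z+f_{k-1}(z)\bigr)$ you mention but do not pursue, additionally identifies the optimal choice of the intermediate variable, but for the only use of \cref{lemma:yi_exp_minusyiminus1} in the paper (the proof of \cref{lemma:Gzz_neg}) the purely qualitative divergence suffices, and both proofs deliver it.
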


Taking also into account the discussion at the end of \Cref{sec:prelim_towards_main_result}, the following lemma (whose proof is based on \cref{lemma:yi_exp_minusyiminus1}) concludes the proof of \cref{thm:intersection_for_all_subspaces_and_all_F} under the extra assumptions $\ell=1$ and $F \equiv 0$.

\begin{lemma} \label{lemma:Gzz_neg}
Let $(\mathcal{X}, \mathcal{C}, \mathcal{R}, \kappa)$ be a weakly reversible mass-action system with $\ell=1$. Then the following two statements hold.
\begin{enumerate}[(i)]
\item \label{it:L0} There exists an $L>0$ such that
\begin{align*}
\langle G(z),z \rangle < 0 \text{ for all } z \in \mathcal{I} \text{ with } \max(z) \geq L.
\end{align*}
\item \label{it:R0} There exists an $R>0$ such that
\begin{align*}
\langle G(z),z \rangle < 0 \text{ for all } z \in \mathcal{I} \text{ with } |z| \geq R.
\end{align*}
\end{enumerate}
\end{lemma}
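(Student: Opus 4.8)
The plan is to prove part \eqref{it:L0} first, as part \eqref{it:R0} should then follow quickly by a compactness argument. Recall $G(z) = A_\kappa e^z$, so $\langle G(z), z\rangle = \langle A_\kappa e^z, z\rangle$. Writing $w = e^z$ (so $w_i = e^{z_i} > 0$) and using the explicit form of the Laplacian $A_\kappa$, we have
\begin{align*}
\langle A_\kappa e^z, z\rangle = \sum_{(i,j)\in\mathcal{R}} \kappa_{ij} w_i (z_j - z_i) = \sum_{(i,j)\in\mathcal{R}} \kappa_{ij} e^{z_i}(z_j - z_i).
\end{align*}
Since we are in the single linkage class case, $\mathcal{I} = \{z : \sum_{i=1}^m z_i = 0\}$, so any $z \in \mathcal{I}$ with $\max(z)$ large must also have $\min(z)$ very negative. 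The intuition is that each term $\kappa_{ij}e^{z_i}(z_j - z_i)$ is "dangerous" (positive) only when $z_j > z_i$, but then the prefactor $e^{z_i}$ is controlled by the smaller coordinate; meanwhile the complex achieving the maximum sits at the end of a directed path (weak reversibility guarantees one into it from anywhere, in particular from a complex with small $z$-value), and along that path the telescoping-type sum $\sum e^{z_{i-1}}(z_i - z_{i-1})$ forces a large negative contribution exactly in the regime controlled by \cref{lemma:yi_exp_minusyiminus1}.

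The key steps, in order: (1) Fix $z\in\mathcal{I}$ with $\max(z) = z_{i^*}$ large; by strong connectedness of the linkage class, pick a complex $i_0$ with $z_{i_0} = \min(z)$ (which is $\leq -z_{i^*}/(m-1) \to -\infty$) and a directed path $i_0 = c_0 \to c_1 \to \cdots \to c_M = i^*$ in $(\mathcal{C},\mathcal{R})$ with distinct vertices, hence $M \leq m-1$. (2) Split the sum $\sum_{(i,j)\in\mathcal{R}}\kappa_{ij}e^{z_i}(z_j-z_i)$ into the contribution of the path edges and the rest. (3) For the non-path edges, bound the positive contributions: whenever $z_j - z_i > 0$ the term is $\leq \kappa_{ij} e^{z_i}(z_j - z_i)$, and since $z_i \leq \max(z) =: \mu$ and $z_j - z_i \leq \mu - \min(z) \leq 2\mu$ (roughly), each such term is $O(\mu e^{\mu})$ — no good on its own, so this naive bound must be refined. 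The right move is instead: re-index along the path so that after normalizing by setting $y_i := z_{c_i} - z_{c_0} \geq 0$ (so $y_0 = 0$ and $y_M = z_{i^*} - z_{i_0} \to \infty$), the path contribution is $\leq -e^{z_{c_0}}\sum_{i=1}^M \kappa_{c_{i-1}c_i}^{-1}\cdots$ — actually one wants a lower bound on how negative $\sum \kappa_{c_{i-1}c_i} e^{z_{c_{i-1}}}(z_{c_i} - z_{c_{i-1}})$ is. Hmm: this sum need not be negative term-by-term. So apply \cref{lemma:yi_exp_minusyiminus1} in the reversed direction: along the path from the maximum down, the coordinates drop from $z_{i^*}$ to $z_{i_0}$, and \cref{lemma:yi_exp_minusyiminus1} (with the $y$'s being the "depths" $z_{i^*} - z_{c_i}$ along the reversed path, $y_M = z_{i^*} - z_{i_0}$) tells us $\sum e^{-y_{i-1}}y_i$ — appropriately matched to $\sum \kappa \, e^{z_{c_{i-1}}}|z_{c_i} - z_{c_{i-1}}|$ up to the constant $e^{z_{i^*}}$ and $\min\kappa, \max\kappa$ — blows up. (4) Combine: the path gives a term whose negative part grows like $e^{z_{i^*}}\cdot(\text{something}\to\infty)$ while all other edges contribute at most $C(z)e^{z_{i^*}}$ for a polynomially-bounded $C(z)$; choose $L$ so the path term dominates. (5) For \eqref{it:R0}: the set $\{z\in\mathcal{I} : |z| \geq R,\ \max(z) < L\}$ — wait, if $\max(z) < L$ then since $\sum z_i = 0$ and $|z|$ large we'd need $\min(z)$ very negative, bounded $\max$... actually then $\langle G(z), z\rangle$ must still be handled: on $\{\max(z) \leq L\}$, each $e^{z_i} \leq e^L$ is bounded, and $\langle G(z),z\rangle = \sum\kappa_{ij}e^{z_i}(z_j - z_i)$; as some $z_i \to -\infty$ one checks the dominant negative terms come from edges leaving the minimum. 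A cleaner route for \eqref{it:R0}: show directly that $\langle G(z),z\rangle \to -\infty$ on any sequence in $\mathcal{I}$ with $|z|\to\infty$, using that $|z|\to\infty$ in $\mathcal{I}$ forces $\max(z)\to\infty$ (because $\sum z_i = 0$ bounds $|z|$ by a constant times $\max(z)\cdot\sqrt m$), so \eqref{it:R0} reduces to \eqref{it:L0} together with the bound $|z| \leq \sqrt{m}\,(m-1)\max(z)$ or similar — actually $|z|^2 = \sum z_i^2$, and with $\sum z_i = 0$ one has $\min(z) \geq -(m-1)\max(z)$, hence $|z| \leq \sqrt{m}\,(m-1)\max(z)$ is false in general; rather $|z|\le \sqrt m \max(|z_i|)$ and $\max|z_i| \le (m-1)\max(z)$ when $\max(z)>0$. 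So $|z| \geq R$ with the relation $\max(z) \geq |z|/(\sqrt m (m-1))$ gives $\max(z) \geq L$ once $R$ is large enough, and \eqref{it:L0} applies. That handles everything.

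The main obstacle will be step (3): organizing the bound on the non-path edges so their total positive contribution is genuinely negligible compared to the blow-up furnished by \cref{lemma:yi_exp_minusyiminus1} along the path. The subtlety is that \cref{lemma:yi_exp_minusyiminus1} is a uniform infimum statement, so one must carefully match the edge weights $\kappa_{c_{i-1}c_i}$ (bounded below by $\kappa_{\min} := \min_{\mathcal{R}}\kappa > 0$), factor out $e^{z_{i^*}}$ or $e^{z_{i_0}}$ consistently, and ensure the "$y_i \geq 0$" hypothesis is met — which is why one walks the path in the direction of decreasing $z$-value is not quite right either since the path is directed; the correct reading is to take $y_i = z_{c_0} - z_{c_i}$ along the *given* directed path $c_0\to\cdots\to c_M$ where $c_0$ is the min and $c_M = i^*$ is the max, so the $y_i$ need not be monotone but $y_M = -(z_{i^*}-z_{i_0}) < 0$, contradicting the hypothesis. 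This sign-bookkeeping is the delicate point and the resolution is to instead use a path *from* the max *to* the min (guaranteed by weak reversibility), set $y_i = z_{c_i} - z_{i_0} \ge 0$ along $c_0 = i^* \to \cdots \to c_M = i_0$ so $y_0 = z_{i^*} - z_{i_0} =: T \to \infty$, $y_M = 0$; then reindex $\tilde y_i := y_{M-i}$ has $\tilde y_0 = 0$, $\tilde y_M = T$, and the path contribution $\sum \kappa_{c_{i-1}c_i} e^{z_{c_{i-1}}}(z_{c_i}-z_{c_{i-1}}) = -\sum \kappa_{c_{i-1}c_i} e^{z_{i_0}} e^{y_{i-1}} (y_{i-1}-y_i) \le -\kappa_{\min} e^{z_{i_0}}\sum e^{y_{i-1}}(y_{i-1}-y_i)$; and $e^{z_{i^*}} = e^{z_{i_0}}e^{T} = e^{z_{i_0}}e^{y_0}$, so dividing through by $e^{z_{i^*}}$ converts this to $-\kappa_{\min}\sum e^{-(y_0 - y_{i-1})}(y_{i-1}-y_i) = -\kappa_{\min}\sum e^{-(\tilde y_M - \tilde y_{M-i+1})}(\ldots)$ — which is exactly the shape of \cref{lemma:yi_exp_minusyiminus1} after the substitution $\hat y_i := \tilde y_M - \tilde y_{M-i}$. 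Getting this chain of substitutions exactly right, and confirming it produces the claimed divergence uniformly over all admissible interior coordinates, is the crux; everything else is routine.
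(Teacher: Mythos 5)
Your skeleton is the same as the paper's (reduce \eqref{it:R0} to \eqref{it:L0} via $\max(z)\geq |z|/(\sqrt{m}(m-1))$ on $\mathcal{I}$; use strong connectivity to pick a directed path from the maximal coordinate towards a coordinate $\leq 0$; feed the path into \cref{lemma:yi_exp_minusyiminus1}), but the two pieces of bookkeeping that make this work are missing, and as written the domination argument of your steps (3)--(4) fails. The positive (dangerous) contributions of the non-path edges are never actually controlled: you note that the naive bound of order $\mu e^{\mu}$ (with $\mu=\max(z)$) ``must be refined'' and later simply assert that these edges contribute at most $C(z)e^{\mu}$ with $C(z)$ polynomially bounded, after which you ``choose $L$ so the path term dominates.'' This is not enough. \cref{lemma:yi_exp_minusyiminus1} gives no rate, and in fact for a path of length $M\geq 2$ the infimum grows only like an iterated logarithm of $y_M$, so after dividing by $e^{\mu}$ the path term cannot beat an error of order $\mu$, let alone a polynomial in $\mu$. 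What is needed is a bound on the bad terms that is \emph{independent of $z$}, and this is exactly what the paper's decomposition delivers: writing $z_j-z_i=(z_j-\max(z))+(\max(z)-z_i)$ in every summand, the quantity $-e^{-\max(z)}\langle G(z),z\rangle$ splits into nonnegative terms $\kappa_{ij}e^{-(\max(z)-z_i)}(\max(z)-z_j)$ (the path edges among them blow up by \cref{lemma:yi_exp_minusyiminus1}, all the others are simply $\geq 0$) plus the single term $-\sum_{(i,j)\in\mathcal{R}}\kappa_{ij}e^{-(\max(z)-z_i)}(\max(z)-z_i)$, which is bounded below by $-|\mathcal{R}|\max(\kappa)$ because $xe^{-x}\leq 1$ for $x \ge 0$.

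The second gap is in your path estimate itself. The coordinates along the path need not be monotone, so your inequality $\sum\kappa_{c_{i-1}c_i}e^{y_{i-1}}(y_i-y_{i-1})\leq-\kappa_{\min}\sum e^{y_{i-1}}(y_{i-1}-y_i)$ fails termwise whenever $y_i>y_{i-1}$ (there you would need $\kappa\leq\kappa_{\min}$), and the resulting expression with mixed-sign linear factors is not of the shape of \cref{lemma:yi_exp_minusyiminus1}, whose summands $e^{-y_{i-1}}y_i$ are all nonnegative. You yourself flag this substitution chain as ``the crux'' but leave it unresolved. The same add-and-subtract-$\max(z)$ trick resolves it: with depths $y_i=\max(z)-z_{\pi(i)}$ along the directed path $\pi(0),\dots,\pi(M)$ from the maximum to a vertex with $z_{\pi(M)}\leq 0$, the path contribution is exactly $\sum_{i=1}^M\kappa_{\pi(i-1)\pi(i)}e^{-y_{i-1}}y_i\geq\min(\kappa)\sum_{i=1}^M e^{-y_{i-1}}y_i$ with $y_0=0$, all $y_i\geq 0$, and $y_M\geq\max(z)$, which is the lemma verbatim and requires no sign gymnastics.
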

\begin{proof}
It is an easy exercise to show that for all $L > 0$ there exists an $R>0$ such that $z \in \mathcal{I}$ (i.e., $\sum_{i=1}^mz_i=0$) and $|z|\geq R$ together imply $\max(z) \geq L$. Thus, once we show \eqref{it:L0}, the statement \eqref{it:R0} follows immediately. The rest of this proof is devoted to show \eqref{it:L0}.

Since
\begin{align*}
\langle G(z),z \rangle &= z^\top A_\kappa e^z =\\
&= \sum_{(i,j)\in\mathcal{R}}e^{z_i}\kappa_{ij}(z_j-z_i)=\\
&=e^{\max(z)}\sum_{(i,j)\in\mathcal{R}} \kappa_{ij}e^{-(\max(z)-z_i)}[(z_j-\max(z))+(\max(z)-z_i)],
\end{align*}
for any $\mathcal{R}' \subseteq \mathcal{R}$ we have
\begin{align*}
-e^{-\max(z)}\langle G(z),z \rangle = A + B + C,
\end{align*}
where
\begin{align*}
A &= \sum_{(i,j)\in\mathcal{R}'} \kappa_{ij}e^{-(\max(z)-z_i)}(\max(z)-z_j), \\
B &= \sum_{(i,j)\in\mathcal{R}\setminus\mathcal{R}'} \kappa_{ij}e^{-(\max(z)-z_i)}(\max(z)-z_j), \\
C &= -\sum_{(i,j)\in\mathcal{R}} \kappa_{ij}e^{-(\max(z)-z_i)}(\max(z)-z_i).
\end{align*}
Our goal is to show that for each $z \in \mathcal{I}$ with $\max(z)$ being big enough, one can choose $\mathcal{R}'$ such that $A+B+C$ is positive. Clearly, $B \geq 0$, because each term in the sum is nonnegative. Also, $C \geq -|\mathcal{R}|\max(\kappa)$, because $e^{-x}x\leq 1$ for all $x\geq 0$. To estimate $A$ from below, we will use \cref{lemma:yi_exp_minusyiminus1}.

Let $y_0 = 0$ and for fixed $M \in \{1,2,\ldots,m-1\}$, let $L_M > 0$ be such that
\begin{align*}
\sum_{i=1}^M e^{-y_{i-1}}y_i \geq 2|\mathcal{R}|\frac{\max(\kappa)}{\min(\kappa)} \text{ for all }y_1,\ldots,y_{M-1}\geq 0 \text{ and for all } y_M\geq L_M.
\end{align*}
Finally, let $L = \max(L_1,\ldots,L_{m-1})$. We will show in the rest of this proof that $A+B+C$ is positive for all $z \in \mathcal{I}$ with $\max(z)\geq L$.

Fix $z \in \mathcal{I}$. Since $\sum_{i=1}^mz_i = 0$, there exist $k$ and $l$ such that $z_k = \max(z)$ and $z_l \leq 0$. The digraph $(\mathcal{C},\mathcal{R})$ is assumed to be strongly connected, therefore, there exists a directed path from $k$ to $l$. Let $\mathcal{R}'$ be the edge set of this directed path, denote by $M\geq1$ the length of this path, and let $\pi(0),\pi(1),\ldots,\pi(M)$ be the enumeration of the vertices visited while travelling from $k$ to $l$ (thus $\pi(0)=k$ and $\pi(M)=l$). Further, let $y_i = \max(z) - z_{\pi(i)}$ ($i = 0,1, \ldots,M$). With this, $y_0 = 0$, $y_1,\ldots,y_M\geq 0$, and
\begin{align*}
A = \sum_{i=1}^M \kappa_{ij}e^{-y_{i-1}}y_i \geq \min(\kappa)\sum_{i=1}^M e^{-y_{i-1}}y_i.
\end{align*}
Then, since $y_M = \max(z) - z_l \geq \max(z)$, assuming $\max(z) \geq L$, we have
\begin{align*}
A + B + C \geq 2|\mathcal{R}|\frac{\max(\kappa)}{\min(\kappa)}\min(\kappa)+ 0 - |\mathcal{R}|\max(\kappa) = |\mathcal{R}|\max(\kappa) > 0.
\end{align*}
This concludes the proof.
\end{proof}

We mentioned before \cref{lemma:Gzz_neg} that the lemma concludes the proof of \cref{thm:intersection_for_all_subspaces_and_all_F} under the extra assumption $\ell = 1$ and $F \equiv 0$. As it is explained in the rest of this paragraph, not only for identically zero functions $F$. If $F:H\to\mathcal{I}^\perp$ is an arbitrary function (and $\ell=1$ holds) then all the coordinates of $F$ are equal, i.e., there exists a function $F_1:H \to \mathbb{R}$ such that $F(z)=F_1(z)\mathbb{1}_m$ for all $z \in H$. Thus, $\widehat{G}(z) = e^{F_1(z)}G(z)$ for all $z \in H$. Therefore, the sign of the scalar product $\langle \widehat{G}(z),z\rangle$ is independent of $F$. This concludes the proof of \cref{thm:intersection_for_all_subspaces_and_all_F} under the extra assumption $\ell = 1$.

We conclude this section by two more results. \cref{lemma:Gzz_neg_many_kappas} below is a stronger version of \cref{lemma:Gzz_neg}. It is apparent from the proof of \cref{lemma:Gzz_neg} \eqref{it:L0} that \cref{lemma:Gzz_neg_many_kappas} \eqref{it:L0_many_kappas} also holds (the positive constant $L$ can be chosen such that it serves its purpose uniformly for many $\kappa$'s at the same time). \cref{lemma:Gzz_neg_many_kappas} \eqref{it:R0_many_kappas} follows from \cref{lemma:Gzz_neg_many_kappas} \eqref{it:L0_many_kappas} exactly the same way as \cref{lemma:Gzz_neg} \eqref{it:R0_many_kappas} followed from \cref{lemma:Gzz_neg} \eqref{it:L0_many_kappas}. As a consequence of \cref{lemma:Gzz_neg_many_kappas} \eqref{it:R0_many_kappas}, we will obtain \cref{cor:Gzz_neg_error_rho} below. This latter corollary will play a crucial role in the proof of \cref{thm:intersection_for_all_subspaces_and_all_F} under $\ell \geq 2$, see \cref{lemma:Omega_is_good_ell_2,lemma:Omega_is_good_ell_general} in \Cref{sec:proof_ell_2,sec:proof_ell_general}, respectively.

\begin{lemma} \label{lemma:Gzz_neg_many_kappas}
Let $(\mathcal{X}, \mathcal{C}, \mathcal{R})$ be a weakly reversible reaction network with $\ell = 1$ and let $D>0$. Then the following two statements hold.
\begin{enumerate}[(i)]
\item \label{it:L0_many_kappas} There exists an $L>0$ such that for all $\kappa:\mathcal{R} \to \mathbb{R}_+$ with $\frac{\max(\kappa)}{\min(\kappa)} \leq D$ we have
\begin{align*}
z^\top A_\kappa e^z < 0 \text{ for all } z \in \mathcal{I} \text{ with } \max(z) \geq L.
\end{align*}
\item \label{it:R0_many_kappas} There exists an $R>0$ such that for all $\kappa:\mathcal{R} \to \mathbb{R}_+$ with $\frac{\max(\kappa)}{\min(\kappa)} \leq D$ we have
\begin{align*}
z^\top A_\kappa e^z < 0 \text{ for all } z \in \mathcal{I} \text{ with } |z| \geq R.
\end{align*}
\end{enumerate}
\end{lemma}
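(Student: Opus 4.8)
The plan is to reduce the ``many-$\kappa$'' statement to a quantitative version of the argument already given for \cref{lemma:Gzz_neg}, simply by observing that the only place where $\kappa$ entered that proof was through the two quantities $\max(\kappa)$ and $\min(\kappa)$, and in fact only through their ratio. Concretely, for part \eqref{it:L0_many_kappas}, I would repeat verbatim the decomposition
\begin{align*}
-e^{-\max(z)}\,z^\top A_\kappa e^z = A + B + C
\end{align*}
with $A$, $B$, $C$ defined as in the proof of \cref{lemma:Gzz_neg} (relative to an edge set $\mathcal{R}'$ to be chosen), and keep the same estimates: $B \geq 0$ term-by-term, and $C \geq -|\mathcal{R}|\max(\kappa)$ since $e^{-x}x \leq 1$ for $x \geq 0$. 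The key point is that for the lower bound on $A$ one uses $A \geq \min(\kappa)\sum_{i=1}^M e^{-y_{i-1}}y_i$, so that after choosing $\mathcal{R}'$ to be a directed path from a vertex $k$ with $z_k = \max(z)$ to a vertex $l$ with $z_l \leq 0$ (possible by strong connectedness, using $\ell=1$ and $\sum_i z_i = 0$), and using $y_M = \max(z) - z_l \geq \max(z)$, we get
\begin{align*}
A + B + C \geq \min(\kappa)\sum_{i=1}^M e^{-y_{i-1}}y_i - |\mathcal{R}|\max(\kappa).
\end{align*}
Dividing through by $\min(\kappa)$ and using $\max(\kappa)/\min(\kappa) \leq D$, it suffices to make $\sum_{i=1}^M e^{-y_{i-1}}y_i \geq 2 D |\mathcal{R}|$ (say), which by \cref{lemma:yi_exp_minusyiminus1} holds whenever $y_M$ — hence $\max(z)$ — exceeds a threshold $L_M$ depending only on $M$, $D$, and $|\mathcal{R}|$, but not on $\kappa$. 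Taking $L = \max(L_1,\ldots,L_{m-1})$ gives a single $L$ that works for every $\kappa$ with $\max(\kappa)/\min(\kappa) \leq D$, which is exactly \eqref{it:L0_many_kappas}.

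For part \eqref{it:R0_many_kappas}, I would invoke the same elementary fact used in \cref{lemma:Gzz_neg}: for every $L > 0$ there is an $R > 0$ such that $z \in \mathcal{I}$ and $|z| \geq R$ force $\max(z) \geq L$ (this is purely about the geometry of the hyperplane $\sum_i z_i = 0$ and involves no $\kappa$ at all). Feeding the $L$ produced in \eqref{it:L0_many_kappas} into this fact yields an $R$, uniform in $\kappa$, with the desired property. Since the threshold $L$ was already $\kappa$-independent, so is $R$, and \eqref{it:R0_many_kappas} follows immediately.

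The only mild obstacle is bookkeeping: one must be careful to track exactly which constants in the original proof depend on $\kappa$ and confirm that each such dependence factors through the ratio $\max(\kappa)/\min(\kappa)$ and is therefore controlled by $D$. In particular the term $C$ scales like $\max(\kappa)$ and the term $A$ scales like $\min(\kappa)$; after dividing by $\min(\kappa)$ everything is expressible in terms of $D$ and the $\kappa$-free data $|\mathcal{R}|$ and the path lengths $M \leq m-1$, so the uniformization goes through. I do not expect any genuinely new idea to be needed beyond \cref{lemma:yi_exp_minusyiminus1} and the already-used hyperplane geometry fact; the proof is essentially a rereading of the proof of \cref{lemma:Gzz_neg} with the constants made explicit in $D$.
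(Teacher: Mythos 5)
Your proof is correct and follows exactly the route the paper intends: the paper proves \cref{lemma:Gzz_neg_many_kappas} by simply noting that in the proof of \cref{lemma:Gzz_neg} the threshold $L$ (chosen via \cref{lemma:yi_exp_minusyiminus1} so that the sum dominates $2|\mathcal{R}|\max(\kappa)/\min(\kappa)\leq 2D|\mathcal{R}|$) depends on $\kappa$ only through the ratio, and part \eqref{it:R0_many_kappas} follows from \eqref{it:L0_many_kappas} via the same $\kappa$-free hyperplane-geometry fact. Your explicit bookkeeping of which constants scale with $\max(\kappa)$ versus $\min(\kappa)$ is precisely the uniformization the paper leaves implicit.
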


\begin{corollary} \label{cor:Gzz_neg_error_rho}
Let $(\mathcal{X}, \mathcal{C}, \mathcal{R}, \kappa)$ be a weakly reversible mass-action system with $\ell=1$. Then for all $\varrho \geq 0$ there exists an $R_\varrho>0$ such that
\begin{align*}
z^\top A_\kappa e^{z+w} < 0 \text{ for all } w \in \mathbb{R}^m  \text{ with } |w| \leq \varrho \text{ and for all } z \in \mathcal{I} \text{ with } |z| \geq R_\varrho.
\end{align*}
\end{corollary}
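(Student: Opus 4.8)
\textbf{Proof plan for \cref{cor:Gzz_neg_error_rho}.}

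The plan is to reduce the perturbed statement to an unperturbed one to which \cref{lemma:Gzz_neg_many_kappas} applies. The key observation is that an additive shift of the exponent by $w$ is nothing but a rescaling of the rate constants. Concretely, for $z\in\mathcal{I}$ and $w\in\mathbb{R}^m$,
\begin{align*}
A_\kappa e^{z+w} = A_\kappa (e^w \odot e^z) = A_{\kappa'} e^z,
\end{align*}
where $\odot$ denotes the entrywise (Hadamard) product and $\kappa'$ is obtained from $\kappa$ by $\kappa'_{ij} = \kappa_{ij} e^{w_i}$ for $(i,j)\in\mathcal{R}$; indeed, the $i$th column of $A_\kappa$ gets multiplied by $e^{w_i}$, which is exactly the effect of scaling all outgoing rates from complex $i$ by $e^{w_i}$. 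Hence $z^\top A_\kappa e^{z+w} = z^\top A_{\kappa'} e^z$.

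Next I would control the spread of the new rate constants. If $|w|\le\varrho$ then $|w_i|\le\varrho$ for each $i$, so $e^{-\varrho}\le e^{w_i}\le e^{\varrho}$, and therefore
\begin{align*}
\frac{\max(\kappa')}{\min(\kappa')} \leq e^{2\varrho}\,\frac{\max(\kappa)}{\min(\kappa)} =: D_\varrho.
\end{align*}
Since $(\mathcal{X},\mathcal{C},\mathcal{R})$ is weakly reversible with $\ell = 1$ and $D_\varrho$ depends only on $\varrho$ and the fixed $\kappa$ (not on $w$ or $z$), \cref{lemma:Gzz_neg_many_kappas}~\eqref{it:R0_many_kappas} applied with $D = D_\varrho$ yields an $R_\varrho > 0$ such that $z^\top A_{\tilde\kappa} e^z < 0$ for every $\tilde\kappa:\mathcal{R}\to\mathbb{R}_+$ with $\max(\tilde\kappa)/\min(\tilde\kappa)\le D_\varrho$ and every $z\in\mathcal{I}$ with $|z|\ge R_\varrho$. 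In particular this holds for $\tilde\kappa = \kappa'$, uniformly over all admissible $w$, which gives exactly the claimed inequality $z^\top A_\kappa e^{z+w} < 0$.

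There is essentially no obstacle here; the only thing to get right is the bookkeeping for which index of $A_\kappa$ is scaled by $e^{w_i}$ — it is the \emph{column} index (the reactant complex), because the $i$th entry of $e^{z+w}$ multiplies the $i$th column of $A_\kappa$. Once that is pinned down, the uniformity claim is immediate from the uniformity already built into \cref{lemma:Gzz_neg_many_kappas}, and no new estimate beyond the elementary bound $e^{-\varrho}\le e^{w_i}\le e^{\varrho}$ is needed.
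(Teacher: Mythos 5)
Your proposal is correct and follows essentially the same route as the paper: both rewrite $A_\kappa e^{z+w}=A_{\kappa^{(w)}}e^z$ with $\kappa^{(w)}_{ij}=\kappa_{ij}e^{w_i}$ and then invoke \cref{lemma:Gzz_neg_many_kappas}~\eqref{it:R0_many_kappas}, the only cosmetic difference being that you bound the ratio $\max(\kappa^{(w)})/\min(\kappa^{(w)})$ explicitly by $e^{2\varrho}\max(\kappa)/\min(\kappa)$ while the paper phrases the same uniformity via $\inf_{|w|\le\varrho}\min(\kappa^{(w)})>0$ and $\sup_{|w|\le\varrho}\max(\kappa^{(w)})<\infty$.
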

\begin{proof}
Motivated by $A_\kappa e^{z+w} = A_\kappa \operatorname{diag}(e^w) e^z$, let us define $\kappa^{(w)}:\mathcal{R} \to \mathbb{R}_+$ by $\kappa^{(w)}_{ij} = \kappa_{ij}e^{w_i}$ for $(i,j) \in \mathcal{R}$ ($\operatorname{diag}(e^w)$ is the diagonal matrix with the entries of $e^w$ on its diagonal). Since $\langle G(z+w),z\rangle = z^\top A_{\kappa^{(w)}} e^z$, $\inf_{|w|\leq\varrho}\min(\kappa^{(w)}) > 0$, and $\sup_{|w|\leq\varrho}\max(\kappa^{(w)}) < \infty$, \cref{lemma:Gzz_neg_many_kappas} \eqref{it:R0_many_kappas} immediately gives the result.
\end{proof}

\section[alternative section title for TOC]{Meditation about proving \cref{thm:intersection_for_all_subspaces_and_all_F} under $\ell\geq2$}
\label{sec:meditation}

Throughout this section, let $(\mathcal{X}, \mathcal{C}, \mathcal{R}, \kappa)$ be a weakly reversible mass-action system, $H$ be an arbitrary subspace of $\mathcal{I}$, and $F:H\to \mathcal{I}^\perp$ be an arbitrary function. We want to show that $\widehat{G}(H)\cap H^\perp\neq\emptyset$, where $\widehat{G}:H\to\mathbb{R}^m$ is defined by $\widehat{G} = G \circ (\operatorname{Id}+F)$. Following the ideas outlined at the end of \Cref{sec:prelim_towards_main_result}, it would be ideal if we could prove the existence of an $R > 0$ such that we have $\langle \widehat{G}(z), z \rangle < 0$ for all $z \in H$ with $|z| = R$. As we have already seen in \Cref{sec:proof_ell_1}, this approach works for $\ell = 1$. Under the restriction $F \equiv 0$, one could prove it also for arbitrary $\ell$. However, as the example in the next paragraph demonstrates, in case both $\ell$ and $F$ can be arbitrary, the existence of such an $R$ is not guaranteed.

Let us consider a mass-action system, for which
\begin{align*}
A_\kappa =
\begin{bmatrix*}[r]
-1 &  1 &  0 &  0 \\
 1 & -1 &  0 &  0 \\
 0 &  0 & -2 &  1 \\
 0 &  0 &  2 & -1
\end{bmatrix*}.
\end{align*}
Further, let $H$ be the span of $u$ and $v$, where
\begin{align*}
u =
\begin{bmatrix*}[r]
-1 \\ 1 \\ 0 \\ 0
\end{bmatrix*}
\text{ and }
v =
\begin{bmatrix*}[r]
0 \\ 0 \\ -1 \\ 1
\end{bmatrix*}.
\end{align*}
Finally, let $F:H\to\mathcal{I}^\perp$ be a linear function with
\begin{align*}
Fu =
\begin{bmatrix*}[r]
-2 \\ -2 \\ 0 \\ 0
\end{bmatrix*}
\text{ and }
Fv =
\begin{bmatrix}
0 \\ 0 \\ 0 \\ 0
\end{bmatrix}.
\end{align*}
Short calculation shows that $\langle \widehat{G}(z),z \rangle > 0$ for all $z = \alpha u + \beta v$ with $\alpha \geq 5$ and $\beta = 1/5$.

We have two goals in the rest of this section. One is to explain what can go wrong, the other is to prepare the notation for \Cref{sec:proof_ell_2,sec:proof_ell_general}.

Let us have a closer look at the scalar product $\langle \widehat{G}(z), z \rangle$. Since the image of $F$ is in $\mathcal{I}^\perp$, there exist functions $F_1, \ldots, F_\ell$ from $H$ to $\mathbb{R}$ such that the vector $F(z)$ (for $z \in H$) takes the form 
\begin{align*}
\begin{bmatrix}
F_1(z)\mathbb{1}_{m_1} \\
\vdots \\
F_\ell(z)\mathbb{1}_{m_\ell} \\
\end{bmatrix} \in \mathbb{R}^{m_1 + \cdots + m_\ell},
\end{align*}
and it is also straightforward to consider the Laplacian matrix $A_\kappa \in \mathbb{R}^{m \times m}$ and any vector $z \in \mathbb{R}^m$ in the block forms
\begin{align*}
A_\kappa = \begin{bmatrix}
A_\kappa(1) &        & 0              \\
            & \ddots &                \\
0           &        & A_\kappa(\ell) \\
\end{bmatrix} \in \mathbb{R}^{(\sum_{i=1}^\ell m_i)\times(\sum_{i=1}^\ell m_i)}
\text{ and }
z = \begin{bmatrix}
z(1) \\
\vdots\\
z(\ell)
\end{bmatrix} \in \mathbb{R}^{\sum_{i=1}^\ell m_i},
\end{align*}
respectively, where the $i$th block corresponds to the $i$th linkage class. (Recall from \Cref{sec:mass_action_systems} that $m_i$ denotes the number of complexes in the $i$th linkage class.) With these, we have
\begin{align} \label{eq:Ghat_z_z_sum}
\langle \widehat{G}(z), z \rangle = e^{F_1(z)} z(1)^\top A_\kappa(1) e^{z(1)} + \cdots + e^{F_\ell(z)} z(\ell)^\top A_\kappa(\ell) e^{z(\ell)}.
\end{align}
We know from \cref{lemma:Gzz_neg} \eqref{it:R0} that for each $i$ the product $z(i)^\top A_\kappa(i) e^{z(i)}$ is negative if $|z(i)|$ is big enough. Further, the function $z(i) \mapsto z(i)^\top A_\kappa(i) e^{z(i)}$ is bounded from above. Still, as the above example shows, the scalar product $\langle \widehat{G}(z), z \rangle$ could be positive no matter how big $|z|$ is. This is because even if $|z|$ is big, there could exist an $i$ such that $|z(i)|$ is small, $z(i)^\top A_\kappa(i) e^{z(i)}$ is positive, and the factors $e^{F_1(z)},\ldots,e^{F_\ell(z)}$ make the positive term dominant among the $\ell$ terms on the r.h.s. of \eqref{eq:Ghat_z_z_sum}.

To overcome the above sketched difficulty, we will truncate the \lq\lq bad parts'' of the ball $\{z\in H ~|~ |z|=r\}$. By \lq\lq bad parts'', we mean points on the boundary of the ball, where the scalar product $\langle \widehat{G}(z), z \rangle$ is not negative, i.e., the vectorfield $\Pi_H \circ \widehat{G} : H \to H$ does not point inwards. After this surgery, we will still have a compact and convex set $\Omega\subseteq\{z\in H ~|~ |z|=r\}$. We will show that the vector field points inwards for all boundary points of $\Omega$.

In \Cref{sec:proof_ell_2,sec:proof_ell_general}, we give the proof of \cref{thm:intersection_for_all_subspaces_and_all_F} for $\ell = 2$ and arbitrary $\ell$, respectively. The case $\ell = 2$ is detailed only for didactic reasons (going directly to the general case is a rather big step in the level of abstraction and therefore the essential features of the approach might remain hidden for the first reading). During the course of the proof of the general case, we will not refer at all to the proof of the case $\ell=2$. Those readers who are short of time, are encouraged to skip \Cref{sec:proof_ell_2}, and proceed directly to \Cref{sec:proof_ell_general}.

\section[alternative section title for TOC]{Proof of \cref{thm:intersection_for_all_subspaces_and_all_F} under $\ell=2$}
\label{sec:proof_ell_2}

Assume throughout this section that $\ell = 2$. Let $\Pi_1:\mathbb{R}^{m_1+m_2} \to \mathbb{R}^{m_1+m_2}$ and $\Pi_2:\mathbb{R}^{m_1+m_2} \to \mathbb{R}^{m_1+m_2}$ be the orthogonal projections defined by
\begin{align*}
\Pi_1z =
\begin{bmatrix}
z(1) \\
0 \\
\end{bmatrix}
\text{ and } 
\Pi_2z =
\begin{bmatrix}
0 \\
z(2) \\
\end{bmatrix}
\text{ for } z \in \mathbb{R}^{m_1+m_2},
\end{align*}
respectively. Further, let
\begin{align}
\label{eq:H1_H2_V}
\begin{split}
H_1 &= H \cap \ker \Pi_2, \\
H_2 &= H \cap \ker \Pi_1, \text{ and } \\
V   &= H \cap (H_1+H_2)^\perp.
\end{split}
\end{align}
Then, by construction, $H$ is the orthogonal direct sum of $H_1$, $H_2$, and $V$. Thus, each $z \in H$ can be written uniquely as
\begin{align} \label{eq:z_z1_z2_y}
z = z_1 + z_2 + y \text{ with } z_1 \in H_1, z_2 \in H_2, \text{ and } y \in V.
\end{align}
The introduced notations will be used throughout this section.

Some useful properties of these objects are summarised in the following lemma.
\begin{lemma}\label{lemma:H1_H2_V}
For any subspace $H\leq \mathbb{R}^{m_1+m_2}$, let $H_1$, $H_2$, and $V$ be as in \eqref{eq:H1_H2_V}. Further, for a $z \in H$, let $z_1$, $z_2$, and $y$ be as in \eqref{eq:z_z1_z2_y}. Then the following two statements hold.
\begin{enumerate}[(i)]
\item\label{it:ortog_subvectors} For each $z\in H$,
\begin{align*}
&z(1) \text{ is the orthogonal sum of } z_1(1) \text{ and } y(1) \text{ and } \\
&z(2) \text{ is the orthogonal sum of } z_2(2) \text{ and } y(2).
\end{align*}
\item\label{it:eps_y} There exists an $\varepsilon > 0$ such that $|y(1)|\geq\varepsilon|y|$ and $|y(2)|\geq\varepsilon|y|$ hold for all $y \in V$.
\end{enumerate}
\end{lemma}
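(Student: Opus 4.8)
The two claims are of a rather different character: part~\eqref{it:ortog_subvectors} is a direct unwinding of the definitions in \eqref{eq:H1_H2_V}--\eqref{eq:z_z1_z2_y}, while part~\eqref{it:eps_y} is a genuine (if soft) compactness argument. I would treat them separately.

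For part~\eqref{it:ortog_subvectors}, the plan is to chase the orthogonality through the block decomposition. Fix $z \in H$ and write $z = z_1 + z_2 + y$ as in \eqref{eq:z_z1_z2_y}. Since $z_2 \in H_2 = H \cap \ker\Pi_1$, the first block $z_2(1)$ vanishes, so $z(1) = z_1(1) + y(1)$; symmetrically $z(2) = z_2(2) + y(2)$. It remains to check the orthogonality of the two summands in each block. Because $V = H \cap (H_1 + H_2)^\perp$, the vector $y$ is orthogonal to $z_1 \in H_1$ in $\mathbb{R}^{m_1+m_2}$; but $\langle z_1, y\rangle = \langle z_1(1), y(1)\rangle + \langle z_1(2), y(2)\rangle$, and $z_1(2) = 0$ since $z_1 \in H_1 = H \cap \ker\Pi_2$, hence $\langle z_1(1), y(1)\rangle = 0$. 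The same computation with $z_2$ gives $\langle z_2(2), y(2)\rangle = 0$. This settles \eqref{it:ortog_subvectors}.

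For part~\eqref{it:eps_y}, the point is that $y \mapsto |y(1)|$ cannot vanish on $V \setminus \{0\}$: if $y \in V$ has $y(1) = 0$ then $y \in \ker\Pi_1$, so $y \in H \cap \ker\Pi_1 = H_2$; but $y \in V$ is also orthogonal to $H_2$, forcing $y = 0$. Likewise $y(2) = 0$ with $y \in V$ forces $y = 0$. Now I would run the standard compactness argument: the function $y \mapsto \min(|y(1)|, |y(2)|)$ is continuous on the unit sphere $\{y \in V : |y| = 1\}$, which is compact, and by the previous observation it is strictly positive there, so it attains a positive minimum $\varepsilon > 0$. Homogeneity (both $|y(1)|$ and $|y(2)|$ scale linearly in $y$) then extends the inequalities $|y(1)| \geq \varepsilon|y|$ and $|y(2)| \geq \varepsilon|y|$ to all of $V$, the case $y = 0$ being trivial.

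Neither part presents a real obstacle; the only place demanding a moment of care is the orthogonality bookkeeping in \eqref{it:ortog_subvectors}, where one must remember that the ambient inner product splits as a sum over the two linkage-class blocks and that membership in $H_1$ (resp.\ $H_2$) kills the second (resp.\ first) block, so that global orthogonality of $y$ to $H_1$ and $H_2$ descends to blockwise orthogonality. In part~\eqref{it:eps_y} the structural input is precisely that $V$ meets $\ker\Pi_1$ and $\ker\Pi_2$ only at the origin, which is immediate from $V \perp H_2$ and $V \perp H_1$ respectively; after that it is the routine "continuous positive function on a compact sphere" reasoning.
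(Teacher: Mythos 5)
Your proposal is correct and follows essentially the same route as the paper: part (i) is the identical block-orthogonality bookkeeping, and part (ii) rests on the same structural fact that $\Pi_1|_V$ and $\Pi_2|_V$ are injective (i.e.\ $V$ meets $\ker\Pi_1$ and $\ker\Pi_2$ only at $0$). The only cosmetic difference is that you finish (ii) by minimizing $\min(|y(1)|,|y(2)|)$ on the compact unit sphere of $V$, whereas the paper invokes the bounded linear bijection $(\Pi_2|_V)\circ(\Pi_1|_V)^{-1}$ to get the constant; these are interchangeable in finite dimensions.
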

\begin{proof}
First, we prove statement \eqref{it:ortog_subvectors}. Since $z_2 \in \ker \Pi_1$, we have $z_2(1) = 0$, and therefore, $z(1) = z_1(1) + y(1)$. Furthermore, since $z_1 \perp y$ and $z_1(2) = 0$, we have $z_1(1) \perp y(1)$. The case of $z(2)$ is symmetric to the case of $z(1)$.

It is left to show \eqref{it:eps_y}. Since both $\Pi_1|_V$ and $\Pi_2|_V$ are injective (as can be readily seen from the definition of $V$), the composition $(\Pi_2|_V) \circ (\Pi_1|_V)^{-1}$ is a linear bijection between $\ran(\Pi_1|_V)$ and $\ran(\Pi_2|_V)$. Thus, there exists a $c > 0$ such that
\begin{align*}
c |\Pi_1y| \leq |\Pi_2y| \text{ and } c |\Pi_2y| \leq |\Pi_1y| \text{ for all } y \in V.
\end{align*}
Since $|\Pi_1y|=|y(1)|$, $|\Pi_2y|=|y(2)|$, and $|y|^2 = |y(1)|^2 + |y(2)|^2$, there exists an $\varepsilon>0$ such that $|y(1)| \geq \varepsilon |y|$ and $|y(2)| \geq \varepsilon |y|$ for all $y \in V$ (e.g. $\varepsilon = \frac{c}{\sqrt{1+c^2}}$).
\end{proof}

For a triple $(r_0,r_1,r_2)$ with $0 = r_0 < r_1 < r_2$, let
\begin{align} \label{eq:Omega_ell_2}
\Omega = \left\{z \in H ~\Big|~ |z|\leq\sqrt{r_2^2-r_0^2},|z_1|\leq\sqrt{r_2^2-r_1^2},|z_2|\leq\sqrt{r_2^2-r_1^2}\right\}.
\end{align}
Clearly, $\Omega$ is a compact and convex set. Once we prove the following lemma, it also concludes the proof of \cref{thm:intersection_for_all_subspaces_and_all_F} for the special case $\ell = 2$.

\begin{lemma} \label{lemma:Omega_is_good_ell_2}
Let $(\mathcal{X}, \mathcal{C}, \mathcal{R}, \kappa)$ be a weakly reversible mass-action system with $\ell=2$. Let $H$ be an arbitrary subspace of $\mathcal{I}$ and $F:H\to \mathcal{I}^\perp$ be an arbitrary function. Further, let the function $\widehat{G}:H\to\mathbb{R}^m$ be defined by $\widehat{G} = G \circ (\operatorname{Id}+F)$. Then there exists a triple $(r_0,r_1,r_2)$ with $0 = r_0 < r_1 < r_2$ such that the vector field $\Pi_H\circ\widehat{G}:H\to H$ points inwards everywhere on $\partial\Omega$, where $\Omega$ is defined by \eqref{eq:Omega_ell_2}.
\end{lemma}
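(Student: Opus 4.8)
The plan is to verify that on each of the three ``facets'' of $\partial\Omega$, the vector field $\Pi_H\circ\widehat G$ points strictly inward, for a suitable choice of $0=r_0<r_1<r_2$, chosen in that order (first $r_1$, then $r_2$, with $r_0$ forced to be $0$). Recall that $\partial\Omega$ is covered by the three sets where one of the three defining inequalities in \eqref{eq:Omega_ell_2} is tight: $|z|=\sqrt{r_2^2-r_0^2}=r_2$, or $|z_1|=\sqrt{r_2^2-r_1^2}$, or $|z_2|=\sqrt{r_2^2-r_1^2}$. Since $\langle\Pi_H\widehat G(z),\zeta\rangle=\langle\widehat G(z),\zeta\rangle$ for $\zeta\in H$, and the outward normals at these facets are (positive multiples of) $z$, $z_1$, and $z_2$ respectively, ``points inward'' means $\langle\widehat G(z),z\rangle<0$ on the first facet, $\langle\widehat G(z),z_1\rangle<0$ on the second, and $\langle\widehat G(z),z_2\rangle<0$ on the third. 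Using the block decomposition $z=z_1+z_2+y$ together with \cref{lemma:H1_H2_V}\eqref{it:ortog_subvectors} (which gives $z(1)=z_1(1)+y(1)$ orthogonally, $z(2)=z_2(2)+y(2)$ orthogonally) and the expansion \eqref{eq:Ghat_z_z_sum}, each of these three scalar products splits into a sum over the two linkage classes of terms of the form $e^{F_i(z)}\,\langle z(i)^\top A_\kappa(i)e^{z(i)},\text{(a subvector of }z(i))\rangle$.

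The key mechanism is the following: on the facet $|z_1|=\sqrt{r_2^2-r_1^2}$, the constraint $|z|\le r_2$ forces $|y|^2=|z|^2-|z_1|^2-|z_2|^2\le r_2^2-(r_2^2-r_1^2)=r_1^2$, i.e.\ $|y|\le r_1$; hence $|z(2)|=|z_2(2)+y(2)|$ with $|y(2)|\le|y|\le r_1$, so component $2$ of $z$ lives within bounded distance $r_1$ of $z_2(2)$, while component $1$ of $z$ is large whenever $|z_1(1)|$ is large. This is exactly the setting of \cref{cor:Gzz_neg_error_rho}: I would apply that corollary to the (weakly reversible, $\ell=1$) subnetwork forming linkage class $1$, with error bound $\varrho=r_1$, to get a radius $R^{(1)}_{r_1}$ such that $z(1)^\top A_\kappa(1)e^{z(1)+w}<0$ whenever $|w|\le r_1$ and $|\Pi_1 z|\ge R^{(1)}_{r_1}$ with $\Pi_1z\in\mathcal I\cap(\text{block }1)$. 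The factored structure $\langle\widehat G(z),z_1\rangle$ involves $z(1)^\top A_\kappa(1)e^{z(1)}$ contracted against $z_1(1)$; the contribution of $y(1)$ must be controlled, either by further splitting or by noting $z_1(1)\perp y(1)$ and absorbing the cross-term — this bookkeeping, matching the shapes required by \cref{cor:Gzz_neg_error_rho} and \cref{lemma:Gzz_neg}\eqref{it:R0}, is where the argument gets fiddly. The analogous statement holds on the third facet with the roles of $1$ and $2$ swapped.

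Concretely the choice of constants goes: first pick $r_1$ large enough that on the big sphere $|z|=r_2$ (to be chosen later) the ``genuinely large'' blocks dominate; more precisely, first choose $r_1$ so that whenever $|z|=r_2$ both $|z_1|,|z_2|\le\sqrt{r_2^2-r_1^2}$ fails to hold we are forced into a regime where some $|z(i)|$ is provably large — and here the combinatorial role of $V$ enters via \cref{lemma:H1_H2_V}\eqref{it:eps_y}: if $|y|$ is comparable to $|z|$ then \emph{both} $|z(1)|\ge\varepsilon|y|$ and $|z(2)|\ge\varepsilon|y|$ are large, so on the $|z|=r_2$ facet both class-$1$ and class-$2$ terms are negative (by \cref{lemma:Gzz_neg}\eqref{it:R0}) once $r_2$ is big; if instead $|y|$ is small then $z$ is close to $z_1+z_2$ and we are (essentially) on the intersection of the two side facets, handled as above. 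After fixing $r_1$ (large enough to beat the uniform upper bound $M_i:=\sup_{w}w^\top A_\kappa(i)e^w<\infty$ against the $e^{F_i}$ factors, together with the corollary's radii $R^{(i)}_{r_1}$), I would then choose $r_2$ large enough that $\sqrt{r_2^2-r_1^2}\ge R^{(i)}_{r_1}$ for $i=1,2$ and that $r_2\ge R$ from \cref{lemma:Gzz_neg}\eqref{it:R0} applied to each class, and that the $\varepsilon$-gap argument closes.

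The main obstacle I anticipate is not any single estimate but the orchestration: one must simultaneously control three different scalar products using two different ``$\ell=1$'' results (\cref{lemma:Gzz_neg}\eqref{it:R0} for the unperturbed situation and \cref{cor:Gzz_neg_error_rho} for the perturbed one), and the perturbation $w$ playing the role of ``$y(i)$ plus the effect of the unknown factors $e^{F_j}/e^{F_i}$'' is only bounded in norm, not sign-controlled — so the argument must be arranged so that at each boundary point at least one block is provably in its ``negative'' regime and the other blocks, even if positive, are multiplicatively dominated. Getting the quantifier order right ($r_1$ before $r_2$; the corollary's $\varrho=r_1$ fixed before $r_2$ is chosen so that $\sqrt{r_2^2-r_1^2}$ exceeds the resulting radius) is the delicate point, and is presumably why the paper isolates \cref{cor:Gzz_neg_error_rho} as a standalone tool beforehand.
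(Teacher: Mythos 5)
Your skeleton is the paper's: cover $\partial\Omega$ by the three facets with outer normals $z$, $z_1$, $z_2$, fix $r_1$ first to handle the sphere $|z|=\sqrt{r_2^2-r_0^2}$ using \cref{lemma:H1_H2_V} and \cref{lemma:Gzz_neg}~\eqref{it:R0}, then fix $r_2$ to handle the two side facets using \cref{cor:Gzz_neg_error_rho} with $\varrho=r_1$. However, the two places you leave open are exactly where the proof lives, and one of your fallback ideas would fail. On the side facet $|z_1|=\sqrt{r_2^2-r_1^2}$ there is no cross-term to absorb and nothing to dominate: since $z_1(2)=0$, the block-$2$ summand of $\langle\widehat{G}(z),z_1\rangle$ vanishes identically, and one gets exactly $\langle\widehat{G}(z),z_1\rangle=e^{F_1(z)}\,z_1(1)^\top A_\kappa(1)\,e^{z_1(1)+y(1)}$, i.e.\ \eqref{eq:Ghat_z_z1_ell_2}. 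Because $z_1\in H\leq\mathcal{I}$ and $z_1(2)=0$, the vector $z_1(1)$ sums to zero over the complexes of class $1$, $|z_1(1)|=|z_1|=\sqrt{r_2^2-r_1^2}$, and $|y(1)|\leq|y|\leq r_1$; so \cref{cor:Gzz_neg_error_rho} applies verbatim with main vector $z_1(1)$ and perturbation $w=y(1)$ --- not with main vector $z(1)$, which is the mismatch that made your version look ``fiddly''.

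Your closing strategy, that at each boundary point ``the other blocks, even if positive, are multiplicatively dominated'' (and the related idea of choosing $r_1$ to beat $\sup_w w^\top A_\kappa(i)e^w$ against the $e^{F_i}$ factors), cannot work: $F$ is an arbitrary function, so the weights $e^{F_1(z)},e^{F_2(z)}$ are completely uncontrolled and no fixed constant dominates their ratio. The set $\Omega$ in \eqref{eq:Omega_ell_2} is built precisely so that \emph{every} block term carrying a nonzero coefficient is itself negative: on the side facets the other block has coefficient zero, and on the sphere both blocks are made negative. Relatedly, on the sphere your reduction ``if $|y|$ is small we are essentially on the intersection of the side facets, handled as above'' is not valid, since the inequality required there is $\langle\widehat{G}(z),z\rangle<0$ (normal $z$, not $z_1$ or $z_2$). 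The correct estimate uses $|z_2|\leq\sqrt{r_2^2-r_1^2}$ to get $|z_1|^2+|y|^2\geq r_1^2-r_0^2$, whence by \cref{lemma:H1_H2_V}~\eqref{it:ortog_subvectors} and \eqref{it:eps_y} one has $|z(1)|\geq\varepsilon\sqrt{(r_1^2-r_0^2)/2}$, and symmetrically for $|z(2)|$; this lower bound is at scale $r_1$ and independent of $r_2$, which is exactly what allows $r_1$ to be fixed before $r_2$. Your dichotomy ``$|y|$ comparable to $|z|$'' versus ``$|y|$ small'', measured at scale $r_2$, would not deliver that.
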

\begin{proof}
Clearly, $\partial \Omega$ is covered by
\begin{align*}
\left\{z \in \Omega ~\Big|~ |z|=\sqrt{r_2^2-r_0^2}\right\} \bigcup \left(\bigcup_{i=1}^2\left\{z \in \Omega ~\Big|~ |z_i|=\sqrt{r_2^2-r_1^2}\right\}\right).
\end{align*}
The numbers $r_1$ and $r_2$ will be chosen below such that $\langle \widehat{G}(z),n(z) \rangle$ is negative for all $z \in \partial \Omega$, where $n(z)$ denotes any outer normal vector of $\Omega$ at $z$ (the outer normal vector is not necessarily unique). Thus, we have to show that (after choosing $r_1$ and $r_2$ appropriately)
\begin{align}
\label{eq:neg_on_boundary_z}
\langle \widehat{G}(z), z \rangle &< 0 \text{ for all } z \in \Omega \text{ with } |z|=\sqrt{r_2^2-r_0^2}, \\
\label{eq:neg_on_boundary_z1}
\langle \widehat{G}(z), z_1 \rangle &< 0 \text{ for all } z \in \Omega \text{ with } |z_1|=\sqrt{r_2^2-r_1^2}, \\
\label{eq:neg_on_boundary_z2}
\langle \widehat{G}(z), z_2 \rangle &< 0 \text{ for all } z \in \Omega \text{ with } |z_2|=\sqrt{r_2^2-r_1^2}.
\end{align}

First, we will set $r_1$ such that \eqref{eq:neg_on_boundary_z} holds. Since
\begin{align}\label{eq:Ghat_z_z_ell_2}
\langle \widehat{G}(z), z \rangle = e^{F_1(z)} z(1)^\top A_\kappa(1) e^{z(1)} + e^{F_2(z)} z(2)^\top A_\kappa(2) e^{z(2)},
\end{align}
it suffices to show that both $|z(1)|$ and $|z(2)|$ are at least $R=\max(R(1),R(2))$, where $R(i)$ is the threshold guaranteed to exist by \cref{lemma:Gzz_neg} \eqref{it:R0} when applied to the $i$th linkage class ($i = 1,2$). (Instead of referring to \cref{lemma:Gzz_neg} \eqref{it:R0}, one could equivalently refer to \cref{cor:Gzz_neg_error_rho} with $\varrho=0$.)

Assuming $|z| = \sqrt{r_2^2-r_0^2}$, and using also $|z_2| \leq \sqrt{r_2^2-r_1^2}$, we have
\begin{align*}
|z_1|^2 + |y|^2 = |z|^2 - |z_2|^2 \geq (r_2^2-r_0^2) - (r_2^2-r_1^2) = r_1^2-r_0^2.
\end{align*}
Therefore, 
\begin{align*}
|z(1)|^2 &= |z_1(1)|^2+|y(1)|^2 = |z_1|^2+|y(1)|^2 \geq \\
&\geq
\begin{cases}
|z_1|^2 \geq \frac{r_1^2-r_0^2}{2}, & \text{ if } |y|^2\leq\frac{r_1^2-r_0^2}{2}, \\
|y(1)|^2 \geq \varepsilon^2 |y|^2 \geq \varepsilon^2 \frac{r_1^2-r_0^2}{2}, & \text{ if } |y|^2\geq\frac{r_1^2-r_0^2}{2}, \\
\end{cases}
\end{align*}
where the first equality comes from \cref{lemma:H1_H2_V} \eqref{it:ortog_subvectors}, while $\varepsilon$ is as in \cref{lemma:H1_H2_V} \eqref{it:eps_y}. Thus, in any case, $|z(1)| \geq \varepsilon\sqrt{\frac{r_1^2-r_0^2}{2}}$. Analogous reasoning shows that $|z(2)| \geq \varepsilon\sqrt{\frac{r_1^2-r_0^2}{2}}$. Let us fix $r_1>r_0$ such that $\varepsilon\sqrt{\frac{r_1^2-r_0^2}{2}}>R$. Then both terms on the r.h.s. of \eqref{eq:Ghat_z_z_ell_2} are negative and we have proven \eqref{eq:neg_on_boundary_z}.

Finally, we will set $r_2$ such that both \eqref{eq:neg_on_boundary_z1} and \eqref{eq:neg_on_boundary_z2} hold. Fix $i \in \{1,2\}$. Since
\begin{align*}
z_1 = \begin{bmatrix}
z_1(1) \\
0
\end{bmatrix} \text{ and }
z_2 = \begin{bmatrix}
0 \\
z_2(2)
\end{bmatrix},
\end{align*}
we have
\begin{align}\label{eq:Ghat_z_z1_ell_2}
\langle \widehat{G}(z),z_i \rangle = e^{F_i(z)} z_i(i) A_\kappa(i) e^{z_i(i)+y(i)}.
\end{align}
Assuming $|z_i| = \sqrt{r_2^2-r_1^2}$, and using also $|z| \leq \sqrt{r_2^2-r_0^2}$, we have
\begin{align*}
|y(i)|^2 \leq |y|^2 = |z|^2 - |z_1|^2 - |z_2|^2 \leq (r_2^2-r_0^2)-(r_2^2-r_1^2) = r_1^2.
\end{align*}
Let now $r_2$ be such that $\sqrt{r_2^2-r_1^2} > \max(R_{r_1}(1),R_{r_1}(2))$, where $R_{r_1}(i)$ is the threshold guaranteed to exist by \cref{cor:Gzz_neg_error_rho} when applied to the $i$th linkage class with $\varrho = r_1$. Then, by \cref{cor:Gzz_neg_error_rho}, the r.h.s. of \eqref{eq:Ghat_z_z1_ell_2} is negative. This concludes the proof of \eqref{eq:neg_on_boundary_z1} and \eqref{eq:neg_on_boundary_z2}, also the proof of the lemma, and, in turn, also the proof of \cref{thm:intersection_for_all_subspaces_and_all_F} in the special case $\ell=2$.
\end{proof}

\section[alternative section title for TOC]{Proof of \cref{thm:intersection_for_all_subspaces_and_all_F} for arbitrary $\ell$}
\label{sec:proof_ell_general}

As preparation for proving \cref{thm:intersection_for_all_subspaces_and_all_F} in general (i.e., without any restriction on the number of linkage classes), we first collect some general observations.

Fix $Q,Q' \subseteq \{1,\ldots,\ell\}$ with $Q'\subseteq Q$ and a subspace $H \leq \mathbb{R}^{\sum_{i=1}^\ell m_i}$ for the part before \cref{lemma:H1_H2_V_general}. Further, let $Q^c = \{1,\ldots,\ell\}\setminus Q$.

Let $\Pi_Q : \mathbb{R}^{\sum_{i=1}^\ell m_i} \to \mathbb{R}^{\sum_{i=1}^\ell m_i}$ be the orthogonal projection defined by
\begin{align*}
(\Pi_Qz)(i) = \begin{cases} z(i), & \text{ if } i\in Q, \\
                            0,    & \text{ if } i\notin Q, \end{cases}
\end{align*}
or equivalently, with vectorial notation, $(\Pi_Qz)(Q) = z(Q)$ and $(\Pi_Qz)(Q^c) = 0$.

Let us define the subspace $H_Q$ of $H$ by
\begin{align*}
H_Q = H \cap \ker \Pi_{Q^c}.
\end{align*}
I.e., $H_Q$ consists of those elements of $H$ whose support is contained in the blocks corresponding to $Q$. For $z \in H$, we will use the shorthand notation $z_Q$ for $\Pi_{H_Q}z$ (i.e., $z_Q$ is the component of $z$ lying in $H_Q$).

Then the subspace $H_Q$ is the orthogonal direct sum of $H_{Q'}$, $H_{Q \setminus Q'}$, and $V_{Q,Q'}$, where $V_{Q,Q'}$ is by definition $H_Q \cap (H_{Q'} + H_{Q \setminus Q'})^\perp$. Thus, for each $z \in H$, we have
\begin{align} \label{eq:z_Q_ortog_decomp}
z_Q = z_{Q'} + z_{Q \setminus Q'} + y,
\end{align}
where $y$ is by definition the component of $z_Q$ lying in $V_{Q,Q'}$. (Our notations are already cumbersome enough, so we do not indicate in the notation of $y$ its dependence on $Q$, $Q'$, and $z$. It will not cause any misunderstanding.) The three components of $z_Q$ on the r.h.s. of \eqref{eq:z_Q_ortog_decomp} are pairwise orthogonal. Further, note that the supports of the vectors $z_Q$, $z_{Q'}$, $z_{Q \setminus Q'}$, and $y$ lie in the blocks corresponding to $Q$.

Some useful properties of the introduced objects are summarised in the following lemma.
\begin{lemma}\label{lemma:H1_H2_V_general}
Fix a subspace $H\leq \mathbb{R}^{\sum_{i=1}^\ell m_i}$ and a pair $(Q,Q')$ with $Q' \subseteq Q \subseteq \{1,\ldots,\ell\}$. With the introduced notations, the following two statements hold.
\begin{enumerate}[(i)]
\item\label{it:ortog_subvectors_general} For each $z\in H$, the vector $z_Q(Q')$ is the orthogonal sum of $z_{Q'}(Q')$ and $y(Q')$, where $y$ is as in \eqref{eq:z_Q_ortog_decomp}.
\item\label{it:eps_y_general} There exists an $\varepsilon > 0$ such that $|y(Q')|\geq\varepsilon|y|$ holds for all $y \in V_{Q,Q'}$.
\end{enumerate}
\end{lemma}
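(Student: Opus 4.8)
### Proof plan for Lemma~\ref{lemma:H1_H2_V_general}

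The plan is to imitate, almost verbatim, the proof of Lemma~\ref{lemma:H1_H2_V}, with the index set $\{1,2\}$ replaced by the pair $(Q,Q')$; the two statements are the natural generalisations of parts~\eqref{it:ortog_subvectors} and~\eqref{it:eps_y} there, and the same arguments go through once one tracks supports carefully.

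For part~\eqref{it:ortog_subvectors_general}, first I would use the orthogonal decomposition $z_Q = z_{Q'} + z_{Q\setminus Q'} + y$ from~\eqref{eq:z_Q_ortog_decomp} and project onto the blocks indexed by $Q'$. The point is that $z_{Q\setminus Q'} \in H_{Q\setminus Q'} = H\cap\ker\Pi_{(Q\setminus Q')^c}$, so its support lies in the blocks corresponding to $Q\setminus Q'$, which are disjoint from those corresponding to $Q'$; hence $z_{Q\setminus Q'}(Q') = 0$ and therefore $z_Q(Q') = z_{Q'}(Q') + y(Q')$. For orthogonality of $z_{Q'}(Q')$ and $y(Q')$: since $z_{Q'}\perp y$ (they are distinct summands of the orthogonal direct sum decomposition of $H_Q$) and since $z_{Q'}$ is supported on the blocks of $Q'$ (as $z_{Q'}\in H_{Q'}$), the scalar product $\langle z_{Q'},y\rangle$ equals $\langle z_{Q'}(Q'),y(Q')\rangle$, which must then vanish. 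That is exactly the claim.

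For part~\eqref{it:eps_y_general}, the argument is the compactness/finite-dimensional-linear-algebra trick used for Lemma~\ref{lemma:H1_H2_V}\eqref{it:eps_y}. The key observation is that the restriction of $\Pi_{Q'}$ to $V_{Q,Q'}$ is injective: if $y\in V_{Q,Q'}$ and $y(Q')=0$, then $y$ is supported on the blocks of $Q\setminus Q'$ (recall $y$'s support lies in the blocks of $Q$, and we just removed the $Q'$ part), so $y\in H_Q\cap\ker\Pi_{Q'} = H_{Q\setminus Q'}$; but $y\in V_{Q,Q'} = H_Q\cap(H_{Q'}+H_{Q\setminus Q'})^\perp$ is orthogonal to $H_{Q\setminus Q'}$, forcing $y=0$. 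Injectivity of a linear map on a finite-dimensional space means $|\Pi_{Q'}y| = |y(Q')|$ is bounded below by a positive constant times $|y|$ on $V_{Q,Q'}$ (take $\varepsilon$ to be the reciprocal of the operator norm of $(\Pi_{Q'}|_{V_{Q,Q'}})^{-1}$, or equivalently the smallest singular value of $\Pi_{Q'}|_{V_{Q,Q'}}$), which is precisely the asserted inequality.

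There is essentially no obstacle here; the only thing that requires a little care is bookkeeping of supports — keeping straight which vectors are supported on which unions of blocks, and that the blocks of $Q'$ and of $Q\setminus Q'$ partition the blocks of $Q$. Once that is in place, both parts are immediate, and the proof is short.
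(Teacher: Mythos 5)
Your proposal is correct and follows essentially the same route as the paper: the same support bookkeeping for part (i), and for part (ii) the same key observation that $\Pi_{Q'}|_{V_{Q,Q'}}$ is injective, from which a uniform lower bound follows by finite-dimensionality. The only (harmless) difference is cosmetic: you extract $\varepsilon$ directly as the smallest singular value of $\Pi_{Q'}|_{V_{Q,Q'}}$, whereas the paper routes through the bijection $(\Pi_{Q\setminus Q'}|_V)\circ(\Pi_{Q'}|_V)^{-1}$ and the identity $|y|^2=|y(Q')|^2+|y(Q\setminus Q')|^2$.
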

\begin{proof}
First, we prove statement \eqref{it:ortog_subvectors_general}. Since $z_{Q\setminus Q'} \in \ker \Pi_{Q'}$, we have $z_{Q\setminus Q'}(Q') = 0$, and therefore, $z_Q(Q') = z_{Q'}(Q') + y(Q')$. Furthermore, since $z_{Q'} \perp y$ and $z_{Q'}(Q\setminus Q') = 0$, we have $z_{Q'}(Q') \perp y(Q')$.

It is left to show \eqref{it:eps_y_general}. In order to ease the notation, we write $V$ for $V_{Q,Q'}$. Since both $\Pi_{Q'}|_V$ and $\Pi_{Q \setminus Q'}|_V$ are injective (as can be readily seen from the definition of $V$), the composition $(\Pi_{Q \setminus Q'}|_V) \circ (\Pi_{Q'}|_V)^{-1}$ is a linear bijection between $\ran (\Pi_{Q'}|_V)$ and $\ran (\Pi_{Q \setminus Q'}|_V)$. Thus, there exists a $c > 0$ such that
\begin{align*}
c |\Pi_{Q'}y| \leq |\Pi_{Q \setminus Q'}y| \text{ and } c |\Pi_{Q \setminus Q'}y| \leq |\Pi_{Q'}y| \text{ for all } y \in V.
\end{align*}
Since $|\Pi_{Q'}y|=|y(Q')|$, $|\Pi_{Q\setminus Q'}y|=|y(Q\setminus Q')|$, and $|y|^2 = |y(Q')|^2 + |y(Q\setminus Q')|^2$, there exists an $\varepsilon>0$ such that $|y(Q')| \geq \varepsilon |y|$ holds for all $y \in V$ (e.g. $\varepsilon = \frac{c}{\sqrt{1+c^2}}$).
\end{proof}

For a tuple $(r_0,r_1,\ldots,r_\ell)$ with $0 = r_0 < r_1 < \cdots < r_\ell$, let
\begin{align} \label{eq:Omega_general}
\Omega = \left\{z \in H ~\Big|~ |z_Q|\leq \sqrt{r_\ell^2-r_{\ell-|Q|}^2}\text{ for all }\emptyset\neq Q\subseteq\{1,\ldots,\ell\}\right\}.
\end{align}
Clearly, $\Omega$ is a compact and convex set. Once we prove the following lemma, it also concludes the proof of \cref{thm:intersection_for_all_subspaces_and_all_F} for arbitrary $\ell$.

\begin{lemma} \label{lemma:Omega_is_good_ell_general}
Let $(\mathcal{X}, \mathcal{C}, \mathcal{R}, \kappa)$ be a weakly reversible mass-action system. Let $H$ be an arbitrary subspace of $\mathcal{I}$ and $F:H\to \mathcal{I}^\perp$ be an arbitrary function. Further, let the function $\widehat{G}:H\to\mathbb{R}^m$ be defined by $\widehat{G} = G \circ (\operatorname{Id}+F)$. Then there exists a tuple $(r_0,r_1,\ldots,r_\ell)$ with $0 = r_0 < r_1 < \cdots < r_\ell$ such that the vector field $\Pi_H\circ\widehat{G}:H\to H$ points inwards everywhere on $\partial\Omega$, where $\Omega$ is defined by \eqref{eq:Omega_general}.
\end{lemma}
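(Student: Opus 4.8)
The plan is to generalise the two-linkage-class argument of \cref{lemma:Omega_is_good_ell_2} by choosing the radii $r_1 < r_2 < \cdots < r_\ell$ one at a time, in increasing order, so that when $r_k$ is fixed all the constraints active on pieces of $\partial\Omega$ of ``codimension $k$'' in the block structure are controlled. Concretely, $\partial\Omega$ is covered by the sets $\{z\in\Omega ~|~ |z_Q| = \sqrt{r_\ell^2 - r_{\ell-|Q|}^2}\}$ over all $\emptyset\neq Q\subseteq\{1,\ldots,\ell\}$, and at such a point the corresponding outer normal is $z_Q$ (since the active constraint is $|z_Q|^2 \le r_\ell^2 - r_{\ell-|Q|}^2$ and $z\mapsto z_Q$ is a linear projection, so $\Pi_H(\nabla |z_Q|^2) = 2z_Q$). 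So, exactly as in the $\ell=2$ case, it suffices to arrange
\begin{align*}
\langle \widehat{G}(z), z_Q \rangle < 0 \text{ for all } z\in\Omega \text{ with } |z_Q| = \sqrt{r_\ell^2 - r_{\ell-|Q|}^2}, \text{ for every } \emptyset\neq Q.
\end{align*}
Using the block decomposition \eqref{eq:Ghat_z_z_sum} and the fact that $z_Q$ is supported on the blocks in $Q$, the scalar product $\langle \widehat{G}(z), z_Q\rangle$ splits as $\sum_{i\in Q} e^{F_i(z)}\, z_Q(i)^\top A_\kappa(i)\, e^{z(i)}$, and since $z(i) = z_Q(i) + (\text{stuff orthogonal but still possibly nonzero})$ — more precisely $z(i) = z_Q(i) + w_i$ where $w_i$ collects the contributions of blocks-of-$z$ lying outside $H_Q$ — we will want to invoke \cref{cor:Gzz_neg_error_rho} on each linkage class $i\in Q$, with the ``error'' $w_i$ bounded and $|z_Q(i)|$ forced to be large.

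The key mechanism, to be carried out by downward induction on $|Q|$ (equivalently: choosing $r_1,\dots,r_\ell$ in turn), is the following dichotomy, which is the $\ell$-fold analogue of the case split in the proof of \cref{lemma:Omega_is_good_ell_2}. Suppose $|Q| = \ell - k$ and $|z_Q| = \sqrt{r_\ell^2 - r_k^2}$. Writing $z_Q = \sum_{\emptyset\neq Q'\subsetneq Q}(\text{lower pieces}) + \cdots$ is awkward; instead, for a fixed $i\in Q$ take $Q' = Q\setminus\{i\}$ and use the orthogonal decomposition \eqref{eq:z_Q_ortog_decomp}, $z_Q = z_{Q'} + z_{\{i\}} + y$ with $y\in V_{Q,Q'}$, together with \cref{lemma:H1_H2_V_general}: $z_Q(i)$ is the orthogonal sum of $z_{\{i\}}(i)$ and $y(i)$, and $|y(i)|\geq\varepsilon|y|$ for a uniform $\varepsilon>0$. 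Now either $|y|$ is ``large on the scale $r_k$'' — in which case $|z_Q(i)|\geq|y(i)|\geq\varepsilon|y|$ is large — or $|y|$ is small, forcing $|z_{\{i\}}| $ to carry most of $|z_Q|$, hence $|z_{\{i\}}(i)| = |z_{\{i\}}|$ is large. Either way $|z_Q(i)|$ is bounded below by a quantity that we can make exceed any prescribed threshold by taking $r_k$ large relative to $r_{k+1},\dots,r_\ell$ — wait, this is the subtlety: the threshold we need on $|z_Q(i)|$ depends, via \cref{cor:Gzz_neg_error_rho}, on a bound $\varrho$ for $|w_i|$, and $w_i$ involves the components of $z$ in blocks outside $Q$, whose size is governed by the \emph{already-chosen, larger} radii. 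This is why the order of choice matters and why it works: when we fix $r_k$ (treating the constraints with $|Q| = \ell - k$), the radii $r_{k+1},\dots,r_\ell$ are already fixed, so the relevant error bound $\varrho_k$ is a fixed finite number; \cref{cor:Gzz_neg_error_rho} then hands us a threshold $R_{\varrho_k}(i)$ for each $i$, and we pick $r_k$ large enough (but still $r_k < r_{k+1}$, which is fine since we may first rescale the whole tuple) that the lower bound on $|z_Q(i)|$ derived above beats $\max_i R_{\varrho_k}(i)$.

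I anticipate two places needing care. First, the bookkeeping of the ``error'' $w_i$: one must check that on the face $|z_Q| = \sqrt{r_\ell^2 - r_k^2}$ the quantity $|z(i) - z_Q(i)|$ is bounded by a constant depending only on $r_{k+1},\dots,r_\ell$ and not on $z$; this should follow because $z(i) - z_Q(i)$ is supported in block $i$ but comes from $z - z_Q$, and $z - z_Q$ is constrained by the remaining inequalities defining $\Omega$ — in particular $z - z_Q \in H_{Q^c}$-direction is not quite right, so one instead bounds $|z - z_Q|$ using $|z|\le\sqrt{r_\ell^2}$ and the active constraint, or better, decomposes $z$ itself via the family $\{z_{Q''}\}$. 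Second, one must verify that the finitely many radius choices can be made \emph{simultaneously consistent} with $r_1 < \cdots < r_\ell$: since each step only requires $r_k$ to be ``large enough,'' and a posteriori one may multiply the entire tuple $(r_0,\dots,r_\ell)$ by a common large scalar without disturbing any of the strict inequalities or the lower-bound estimates (which are homogeneous of degree $1$ in the $r$'s on the left and involve fixed thresholds on the right), consistency is automatic. The genuinely new feature compared to $\ell = 2$ is purely combinatorial — tracking which constraint is active on which face and organising the induction — the analytic content is entirely in \cref{lemma:Gzz_neg}/\cref{cor:Gzz_neg_error_rho} and \cref{lemma:H1_H2_V_general}, both already in hand.
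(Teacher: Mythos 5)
Your reduction of the problem to showing $\langle \widehat G(z), z_Q\rangle<0$ on each face $\{z\in\Omega : |z_Q|=\sqrt{r_\ell^2-r_{\ell-|Q|}^2}\}$, the per-block splitting of this scalar product, and the intended use of \cref{cor:Gzz_neg_error_rho} together with \cref{lemma:H1_H2_V_general} (applied with $Q'=\{i\}$) all match the actual proof. The gap is in the quantitative bookkeeping, which is precisely where the content of the lemma lies, and your version of it runs backwards. On the face with $|Q|=\ell-k$, i.e.\ $|z_Q|=\sqrt{r_\ell^2-r_k^2}$, decompose $z=z_Q+z_{Q^c}+y$; then for every $i\in Q$,
\begin{align*}
|y(i)|^2\;\le\;|y|^2\;=\;|z|^2-|z_Q|^2-|z_{Q^c}|^2\;\le\;(r_\ell^2-r_0^2)-(r_\ell^2-r_k^2)\;=\;r_k^2,
\end{align*}
so the error fed into \cref{cor:Gzz_neg_error_rho} is bounded by the \emph{smaller} radius $r_k$ (the $r_\ell^2$ terms cancel), not, as you assert, by the ``already-chosen, larger radii'' $r_{k+1},\dots,r_\ell$. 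Likewise, the lower bound on $|z_Q(i)|$ does not come from ``$|z_{\{i\}}|$ carrying most of $|z_Q|$'' when $|y|$ is small --- $z_{Q\setminus\{i\}}$ may carry almost all of $z_Q$ --- but from the nested constraint $|z_{Q\setminus\{i\}}|\le\sqrt{r_\ell^2-r_{k+1}^2}$ in \eqref{eq:Omega_general}, which yields $|z_{\{i\}}|^2+|y'|^2\ge r_{k+1}^2-r_k^2$ (with $y'$ the $V_{Q,\{i\}}$-component of $z_Q$) and hence, via your dichotomy, $|z_Q(i)|\ge\varepsilon\sqrt{(r_{k+1}^2-r_k^2)/2}$. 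Thus the face indexed by $Q$ is handled by making the \emph{next} radius $r_{k+1}$ large relative to the threshold $R_{r_k}$ determined by the already-fixed $r_k$; your prescription ``take $r_k$ large relative to $r_{k+1},\dots,r_\ell$'' is impossible (since $r_k<r_{k+1}$), and enlarging $r_k$ only shrinks the relevant gap $r_{k+1}^2-r_k^2$.

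Because of this, your consistency argument also fails: if, as in your setup, the error bound $\varrho$ were governed by the larger radii, then the thresholds $R_\varrho$ of \cref{cor:Gzz_neg_error_rho} would not be ``fixed'' under rescaling of the tuple --- they grow with $\varrho$ (and far from linearly, as they come from \cref{lemma:Gzz_neg_many_kappas} with $D=e^{2\varrho}\max(\kappa)/\min(\kappa)$) --- so multiplying all radii by a large scalar moves the goalposts along with the estimates, and nothing is gained. With the telescoping bound above, no rescaling is needed at all: choose the radii in increasing order, $r_0=0$ and then $r_1,r_2,\dots,r_\ell$, where at each step the threshold $R_{r_{k-1}}$ is already a fixed number and one merely needs $\varepsilon_k\sqrt{(r_k^2-r_{k-1}^2)/2}\ge R_{r_{k-1}}$; since every face condition is expressed through differences of squared radii, later choices never disturb the faces already verified. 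Once you replace your error estimate by this telescoping bound and let each threshold depend only on the previously chosen smaller radius, your outline turns into the paper's proof.
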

\begin{proof}
Clearly, $\partial \Omega$ is covered by
\begin{align*}
\bigcup_{\emptyset\neq Q\subseteq\{1,\ldots,\ell\}}
\left\{z \in \Omega ~\Big|~ |z_Q|=\sqrt{r_\ell^2-r_{\ell-|Q|}^2}\right\}.
\end{align*}
We will set the values of $r_1,\ldots,r_\ell$ such that for all $\emptyset\neq Q\subseteq\{1,\ldots,\ell\}$ we have
\begin{align} \label{eq:scalar_product_with_Q}
\langle \widehat{G}(z),z_Q \rangle < 0 \text{ for all } z \in \Omega \text{ with } |z_Q| = \sqrt{r_\ell^2-r_{\ell-|Q|}^2}.
\end{align}
The way we proceed is the following. First we set the value of $r_1$ such that \eqref{eq:scalar_product_with_Q} holds for $Q = \{1,\ldots,\ell\}$. Then we set the value of $r_2$ such that \eqref{eq:scalar_product_with_Q} holds for all $Q \subseteq \{1,\ldots,\ell\}$ with $|Q|=\ell-1$. And so on. The final step is to set the value of $r_\ell$ such that \eqref{eq:scalar_product_with_Q} holds for all $Q \subseteq \{1,\ldots,\ell\}$ with $|Q|=1$.

Fix $k \in \{1,\ldots,\ell\}$ and assume that the values of $r_0,r_1,\ldots,r_{k-1}$ are already fixed such that \eqref{eq:scalar_product_with_Q} holds for all $Q \subseteq \{1,\ldots,\ell\}$ with $|Q|\geq \ell-(k-2)$. Our goal is to set the value of $r_k$ such that \eqref{eq:scalar_product_with_Q} holds for all $Q \subseteq \{1,\ldots,\ell\}$ with $|Q| = \ell-(k-1)$.

Fix now $Q \subseteq \{1,\ldots,\ell\}$ with $|Q| = \ell-(k-1)$. Since $z_Q(i)=0$ for all $z \in H$ and for all $i \in Q^c$, we have
\begin{align*}
\langle \widehat{G}(z),z_Q \rangle = \sum_{i\in Q}e^{F_i(z)}z_Q(i)A_\kappa(i)e^{z(i)}.
\end{align*}
Since for all $z \in H$ we have the orthogonal decomposition $z = z_Q + z_{Q^c}+ y$, we also have $z(i) = z_Q(i) + y(i)$ for all $i \in Q$. Therefore,
\begin{align} \label{eq:Ghat_z_z_Q_as_a_sum}
\langle \widehat{G}(z),z_Q \rangle = \sum_{i\in Q}e^{F_i(z)}z_Q(i)A_\kappa(i)e^{z_Q(i)+y(i)}.
\end{align}
For all $z \in \Omega$ with $|z_Q| = \sqrt{r_\ell^2-r_{k-1}^2}$ and for all $i \in Q$, using also $|z| \leq \sqrt{r_\ell^2-r_0^2}$, we have
\begin{align} \label{eq:error_upper_bound}
|y(i)|^2 \leq |y|^2 = |z|^2 - |z_Q|^2 - |z_{Q^c}|^2 \leq
(r_\ell^2 - r_0^2) - (r_\ell^2 - r^2_{k-1}) = r_{k-1}^2.
\end{align}
Let now $R = \max_{i\in Q}R_{r_{k-1}}(i)$, where $R_{r_{k-1}}(i)$ is the threshold guaranteed to exist by \cref{cor:Gzz_neg_error_rho} when applied to the $i$th linkage class with $\varrho = r_{k-1}$. Thus, taking also into account \eqref{eq:error_upper_bound}, if we can set $r_k$ such that $|z_Q(i)| \geq R$ for all $i \in Q$ then all the terms on the r.h.s. of \eqref{eq:Ghat_z_z_Q_as_a_sum} are negative.

Fix $i \in Q$ and a $z \in \Omega$ with $|z_Q| = \sqrt{r_\ell^2-r_{k-1}^2}$. Then with $Q'=\{i\}$ we have $z_Q = z_{Q'} + z_{Q \setminus Q'} + y$, where the three components of $z_Q$ on the r.h.s. are pairwise orthogonal. Thus, using also $|z_{Q\setminus Q'}| \leq \sqrt{r_\ell^2-r_k^2}$ (since $|Q\setminus Q'|=\ell-k$), we have
\begin{align*}
|z_{Q'}|^2 + |y|^2 = |z_Q|^2 - |z_{Q\setminus Q'}|^2 \geq (r_\ell^2-r_{k-1}^2) - (r_\ell^2-r_k^2) = r_k^2-r_{k-1}^2.
\end{align*}
Therefore, 
\begin{align*}
|z_Q(i)|^2 &= |z_{Q'}(i)|^2+|y(i)|^2 = |z_{Q'}|^2+|y(i)|^2 \geq \\
&\geq
\begin{cases}
|z_{Q'}|^2 \geq \frac{r_k^2-r_{k-1}^2}{2}, & \text{ if } |y|^2\leq\frac{r_k^2-r_{k-1}^2}{2}, \\
|y(i)|^2 \geq \varepsilon(Q,i)^2 |y|^2 \geq \varepsilon(Q,i)^2 \frac{r_k^2-r_{k-1}^2}{2}, & \text{ if } |y|^2\geq\frac{r_k^2-r_{k-1}^2}{2}, \\
\end{cases}
\end{align*}
where the first equality comes from \cref{lemma:H1_H2_V_general} \eqref{it:ortog_subvectors_general}, while $\varepsilon(Q,i)$ is as in \cref{lemma:H1_H2_V_general} \eqref{it:eps_y_general} when applied with $Q' = \{i\}$. Thus, in any case, $|z_Q(i)| \geq \varepsilon_k\sqrt{\frac{r_k^2-r_{k-1}^2}{2}}$, where $\varepsilon_k$ is the minimum of the numbers $\varepsilon(Q,j)$ over all pairs $(Q,j)$ with $|Q|=\ell-(k-1)$ and $j \in Q$.

Choose $r_k$ so big that $\varepsilon_k\sqrt{\frac{r_k^2-r_{k-1}^2}{2}} \geq R$ holds. This concludes the proof this lemma, the proof of \cref{thm:intersection_for_all_subspaces_and_all_F}, and, in turn, the proof of our main result, \cref{thm:existence_in_each_class}.
\end{proof}

\appendix

\section{A tool that is crucial for proving \cref{lemma:use_birch_in_proof}}
\label{sec:app_birch}

The following lemma is the heart of the proof of \cref{lemma:use_birch_in_proof}.

\begin{lemma}\label{lemma:psi}
Let $Y \in \mathbb{R}^{n \times m}$, $I \in \mathbb{R}^{m \times d}$, $\mathcal{I} = \ran I$, $K = \Pi_\mathcal{I}(\ran Y^\top)$, $p \in \mathbb{R}^n_+$, and $\mathcal{P} = (p+\ran(YI))\cap \mathbb{R}^n_+$. Then the map $\Psi:\mathcal{P} \to K$, defined by $\Psi(x) = \Pi_{\mathcal{I}}Y^\top\log(x)$, is a bijection between $\mathcal{P}$ and $K$.
\end{lemma}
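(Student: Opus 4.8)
The plan is to establish the claimed bijection by exhibiting, for each $w \in K$, exactly one point $x \in \mathcal{P}$ with $\Pi_\mathcal{I} Y^\top \log(x) = w$. First I would unpack what the equation $\Pi_\mathcal{I} Y^\top \log(x) = w$ means: writing $\xi = \log(x) \in \mathbb{R}^n$ (which ranges over all of $\mathbb{R}^n$ as $x$ ranges over $\mathbb{R}^n_+$), and noting $\mathcal{P} = (p + \ran(YI)) \cap \mathbb{R}^n_+$, the condition $x \in \mathcal{P}$ is the \emph{linear} constraint $x - p \in \ran(YI) = \ran S$ on $x$ itself (not on $\xi$), while $\Psi(x) = w$ is a condition on $\xi = \log x$. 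So the two constraints live on different ``sides'' of the $\exp$/$\log$ map, which is exactly the Birch-type situation: one affine subspace in $x$-space and one affine (coset of a subspace) condition in $\log x$-space, meeting in a single point.

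The key device I would use is a strictly convex function whose gradient implements the $\exp$ map. Concretely, consider on the affine set $p + \ran S$ (intersected with $\mathbb{R}^n_+$) the function
\begin{align*}
\varphi(x) = \sum_{i=1}^n \bigl( x_i \log x_i - x_i \bigr) - \langle \eta, x \rangle,
\end{align*}
for a suitable fixed linear functional $\eta$ chosen so that the stationarity conditions of $\varphi$ restricted to $p + \ran S$ reproduce $\Pi_\mathcal{I} Y^\top \log x = w$. Here one needs the algebraic fact $\ran(S^\top) = \ran(I^\top Y^\top) = \ran(I^\top) \cap \ldots$ — more precisely, that the orthogonal complement of $\ran S = \ran(YI)$ pulls back appropriately so that requiring $\nabla \varphi(x) \perp \ran S$ is equivalent to $Y^\top \log x$ lying in a prescribed coset of $(\ran(YI))^{\perp}$-image, which after projecting by $\Pi_\mathcal{I}$ becomes $\Pi_\mathcal{I} Y^\top \log x = w$. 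Strict convexity of $x \mapsto \sum (x_i \log x_i - x_i)$ on $\mathbb{R}^n_+$ gives uniqueness of the minimizer over the convex set $\mathcal{P}$; a coercivity/properness argument (the entropy-like function blows up appropriately, and one must handle the fact that $\mathcal{P}$ may be unbounded and that minimizing sequences could a priori approach $\partial \mathbb{R}^n_+$) gives existence of the minimizer in the open orthant. That minimizer is the desired preimage $x$.

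For surjectivity onto $K$ I must check that every $w \in K = \Pi_\mathcal{I}(\ran Y^\top)$ — and not some larger set — is attained; this is where the choice of $\eta$ and the identity $K = \Pi_\mathcal{I}(\ran Y^\top)$ enter, since the gradient map naturally lands $Y^\top \log x$ in $\ran Y^\top$, hence its $\Pi_\mathcal{I}$-projection in $K$, and by varying the affine offset (equivalently $\eta$, equivalently the base point) one sweeps out all of $K$. I would also verify well-definedness in the reverse direction: $\Psi(x) \in K$ for every $x \in \mathcal{P}$, which is immediate from $\Psi(x) = \Pi_\mathcal{I}(Y^\top \log x) \in \Pi_\mathcal{I}(\ran Y^\top) = K$.

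The main obstacle I anticipate is the existence half — ruling out that the minimizing sequence escapes to the boundary of the positive orthant or to infinity. Because $\mathcal{P}$ is generally noncompact and the coefficient matrix $Y$ may have negative entries, the usual compactness shortcuts are unavailable; one has to argue carefully that the convex function $\varphi$ (entropy plus linear term, restricted to the affine slice $p + \ran S$) is proper and bounded below, using that on the relevant affine subspace the linear part cannot dominate the superlinear growth of the entropy in the directions in which $\mathcal{P}$ is unbounded, and that near $\partial \mathbb{R}^n_+$ the term $x_i \log x_i - x_i$ stays bounded below while forcing the gradient equation to fail there. Once properness and strict convexity are in hand, existence and uniqueness of the minimizer, and hence bijectivity of $\Psi$, follow from standard convex optimization, and the first-order optimality condition delivers exactly the equation defining $\Psi^{-1}$.
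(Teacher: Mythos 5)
Your route is viable but genuinely different from the paper's. The paper does not prove the Birch-type statement at all: it reduces \cref{lemma:psi} to the cited \cref{lemma:birch} via exactly the linear-algebra step you leave unfinished, namely $\Pi_\mathcal{I}Y^\top(\log x-\log x^*)=0 \iff I^\top Y^\top(\log x-\log x^*)=0 \iff \log x-\log x^*\in(\ran(YI))^\perp$, so bijectivity of $\Psi$ becomes: for each $x^*\in\mathbb{R}^n_+$ there is a unique $x$ with $x-p\in\ran(YI)$ and $\log x-\log x^*\in(\ran(YI))^\perp$, i.e.\ \cref{lemma:birch} with $\mathcal{S}=\ran(YI)$. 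You instead propose the standard variational proof of that statement (minimize the strictly convex entropy $\sum_i(x_i\log x_i-x_i)-\langle\log x^*,x\rangle$ over the class), which buys self-containedness at the cost of redoing a known result the paper simply cites with references. Two points in your sketch must be tightened for it to close. First, the trailing identification involving $\ran S^\top$ should just be the equivalence above, using $\Pi_\mathcal{I}v=0\iff I^\top v=0$ and $\ker((YI)^\top)=(\ran(YI))^\perp$, with $\eta=\log x^*$. Second, your boundary argument is misstated: it is not that the entropy \lq\lq stays bounded below'' or that the gradient equation \lq\lq fails'' on $\partial\mathbb{R}^n_+$ (a constrained minimizer need not satisfy the unconstrained gradient equation); rather, $\varphi$ is continuous and coercive on $(p+\ran(YI))\cap\mathbb{R}^n_{\geq 0}$ (superlinear growth of $t\log t$ beats any linear term, so no escape to infinity and a minimizer exists on the closed set), and at any boundary point the one-sided derivative of $t\mapsto\varphi((1-t)x+tp)$ at $t=0^+$ is $-\infty$ because $\log x_i\to-\infty$ as $x_i\to 0^+$ and $p$ lies in the class and in the open orthant; hence the minimizer is interior, stationarity gives $\log x-\log x^*\perp\ran(YI)$, and strict convexity gives uniqueness, i.e.\ injectivity, as you indicate.
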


\cref{lemma:psi} is a consequence of \cref{lemma:birch} below. The latter, due to its relation to Martin Birch's work \cite{birch:1963}, is sometimes called Birch's Theorem, see e.g. \cite[Theorem 1.10]{pachter:sturmfels:2005} and \cite[Theorem 5.12]{gopalkrishnan:miller:shiu:2014}. Several versions/generalizations appear in the CRNT literature, see e.g. \cite[Section 4]{horn:jackson:1972}, \cite[Corollary 4.14]{feinberg:1979}, \cite[Appendix B]{feinberg:1995a}, \cite[Lemma IV.1]{sontag:2001}, \cite[Section 6]{gunawardena:2003}, \cite[Propositon 10]{craciun:dickenstein:shiu:sturmfels:2009}, \cite[Section 5]{gopalkrishnan:miller:shiu:2014}, \cite[Theorem 13]{mueller:hofbauer:regensburger:2018}, and \cite[Theorem 6.7]{craciun:mueller:pantea:yu:2018}.

\begin{lemma}\label{lemma:birch}
Let $\mathcal{S} \leq \mathbb{R}^n_+$ and $p \in \mathbb{R}^n_+$. Then for each $x^* \in \mathbb{R}^n_+$ there exists a unique $x \in \mathbb{R}^n_+$ such that $x-p \in \mathcal{S}$ and $\log x - \log x^* \in \mathcal{S}^\perp$.
\end{lemma}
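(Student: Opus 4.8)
The statement to prove is Lemma~\ref{lemma:birch}, a version of Birch's Theorem: given a subspace $\mathcal{S}\leq\mathbb{R}^n$ (the typesetting says $\mathcal{S}\leq\mathbb{R}^n_+$, but of course $\mathcal{S}$ is a linear subspace of $\mathbb{R}^n$) and a point $p\in\mathbb{R}^n_+$, for each $x^*\in\mathbb{R}^n_+$ there is a unique $x\in\mathbb{R}^n_+$ with $x-p\in\mathcal{S}$ and $\log x-\log x^*\in\mathcal{S}^\perp$.

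The plan is to realize $x$ as the unique minimizer of a strictly convex function over the (relatively open, nonempty) set $(p+\mathcal{S})\cap\mathbb{R}^n_+$, and then read off the two required conditions from stationarity. Concretely, I would consider the function
\begin{align*}
\varphi(x) = \sum_{i=1}^n \Bigl( x_i\log\frac{x_i}{x^*_i} - x_i \Bigr),
\end{align*}
defined on $\mathbb{R}^n_+$. This is a (generalized, unnormalized) relative entropy: it is strictly convex on $\mathbb{R}^n_+$ because its Hessian is the positive definite diagonal matrix $\operatorname{diag}(1/x_1,\ldots,1/x_n)$, and its gradient is $\nabla\varphi(x) = (\log(x_i/x^*_i))_{i=1}^n = \log x - \log x^*$. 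The constraint set $A = (p+\mathcal{S})\cap\mathbb{R}^n_+$ is convex; it is nonempty since $p\in A$; and it is relatively open in the affine subspace $p+\mathcal{S}$. I would minimize $\varphi$ over $A$.

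The key steps, in order: (1) Verify the convexity/gradient/Hessian claims for $\varphi$; strict convexity gives uniqueness of any minimizer. (2) Show a minimizer exists. This needs a coercivity/properness argument: the sublevel sets of $\varphi$ restricted to $A$ are bounded and their closure stays in $\mathbb{R}^n_+$, so the infimum is attained at an interior point of $A$ and not approached only on the boundary $\partial\mathbb{R}^n_+$. The term $x_i\log x_i$ grows superlinearly, which controls behaviour as $|x|\to\infty$ within $A$; and as some coordinate $x_i\to 0^+$ the term $x_i\log(x_i/x_i^*)-x_i\to 0$, so the boundary is not automatically excluded by $\varphi\to+\infty$ there — one must instead argue that the minimizer over the closure $\overline{A}$, if it had a zero coordinate, could be perturbed along $\mathcal{S}$ into $\mathbb{R}^n_+$ to strictly decrease $\varphi$ (using that the one-sided directional derivative toward the interior is $-\infty$ in any such direction). (3) At the minimizer $x\in A$, first-order optimality over the affine set $p+\mathcal{S}$ gives $\nabla\varphi(x)\perp\mathcal{S}$, i.e. $\log x-\log x^*\in\mathcal{S}^\perp$; and $x\in A$ gives exactly $x-p\in\mathcal{S}$ and $x\in\mathbb{R}^n_+$. (4) For uniqueness of the pair of conditions (not just of the minimizer): if $x$ and $\tilde x$ both satisfy $x-p,\tilde x-p\in\mathcal{S}$ and $\log x-\log x^*,\log\tilde x-\log x^*\in\mathcal{S}^\perp$, then $x-\tilde x\in\mathcal{S}$ while $\log x-\log\tilde x\in\mathcal{S}^\perp$, hence $\langle x-\tilde x,\ \log x-\log\tilde x\rangle=0$; but $\sum_i(x_i-\tilde x_i)(\log x_i-\log\tilde x_i)\geq 0$ with equality iff $x=\tilde x$ (strict monotonicity of $\log$), so $x=\tilde x$. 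This last argument is clean and actually makes step (2)'s uniqueness redundant, but existence still requires the variational setup.

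The main obstacle is step (2), the existence of the minimizer while keeping it in the open orthant $\mathbb{R}^n_+$: one has to handle both the "escape to infinity" direction and the "escape to the boundary" direction of the constraint set. The cleanest route is to fix any $q\in A$, restrict to the compact sublevel set $\{x\in\overline{A}: \varphi(x)\leq\varphi(q)\}$ (bounded by superlinear growth of $\sum x_i\log x_i$, and closed), obtain a minimizer $x$ over this compact set by continuity of $\varphi$ on $\overline{\mathbb{R}^n_+}$ (with the convention $0\log 0=0$), and then rule out $x\in\partial\mathbb{R}^n_+$: if $x_i=0$ for $i$ in some nonempty set, pick a direction $v\in\mathcal{S}$ with $v_i>0$ for those $i$ and $v$ small enough elsewhere to keep $x+tv\in\mathbb{R}^n_+$ for small $t>0$ (such $v$ exists because $q-x\in\mathcal{S}$ has $q_i>0$ on the zero set of $x$), and check that $\frac{d}{dt}\varphi(x+tv)\to-\infty$ as $t\to 0^+$ because of the $\log(t v_i)$ terms, contradicting minimality. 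Once interiority is established, $\varphi$ is differentiable at $x$ and the Lagrange/first-order condition in step (3) applies verbatim. Everything else is routine convex-analysis bookkeeping.
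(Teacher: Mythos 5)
Your proposal is correct, but note that the paper itself gives no proof of \cref{lemma:birch}: it quotes the result as Birch's Theorem and defers to the literature (e.g.\ \cite{horn:jackson:1972}, \cite{feinberg:1995a}, \cite{sontag:2001}), where essentially the argument you outline is the standard one. Your route --- minimizing the strictly convex function $\varphi(x)=\sum_i\bigl(x_i\log(x_i/x_i^*)-x_i\bigr)$ over $(p+\mathcal{S})\cap\mathbb{R}^n_+$, getting existence from compactness of sublevel sets of $\varphi$ in the closed orthant, excluding minimizers with a zero coordinate via the one-sided derivative tending to $-\infty$ along the direction $q-x$ for a point $q$ of the positive class, and reading off $\log x-\log x^*\in\mathcal{S}^\perp$ from first-order optimality on the affine set $p+\mathcal{S}$ --- is precisely the classical free-energy proof, and your separate uniqueness argument, pairing $x-\tilde x\in\mathcal{S}$ with $\log x-\log\tilde x\in\mathcal{S}^\perp$ and using $\langle x-\tilde x,\log x-\log\tilde x\rangle\geq 0$ with equality iff $x=\tilde x$, is also standard and correct. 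The only blemish is the early passing claim that the closure of the sublevel sets stays in $\mathbb{R}^n_+$; it need not, as you yourself then observe, and the perturbation argument you give in the final version of step (2) repairs this, so the proof as ultimately organised is sound. (You are also right that $\mathcal{S}\leq\mathbb{R}^n_+$ in the statement is a typo for $\mathcal{S}\leq\mathbb{R}^n$; in the paper's application $\mathcal{S}=\ran(YI)$.)
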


We explain in the rest of this section how \cref{lemma:psi} follows from \cref{lemma:birch}. Since the elements of $K$ are exactly those that can be written in the form $\Pi_\mathcal{I}Y^\top \log x^*$ for some $x^* \in \mathbb{R}^n_+$, the bijectivity of $\Psi$ is equivalent to the following statement. For each $x^* \in \mathbb{R}^n_+$ there exists a unique $x \in \mathcal{P}$ such that
\begin{align} \label{eq:birch}
\Pi_\mathcal{I}Y^\top \log x = \Pi_\mathcal{I}Y^\top \log x^*.
\end{align}
Since $\mathcal{I} = \ran I$, the equality \eqref{eq:birch} is equivalent to
\begin{align*}
I^\top Y^\top (\log x - \log x^*) = 0.
\end{align*}
Therefore, the bijectivity of $\Psi$ can be rephrased in the following way. For each $x^* \in \mathbb{R}^n_+$ there exists a unique $x \in \mathbb{R}^n_+$ such that $x-p \in \ran(YI)$ and $\log x - \log x^* \in \ran(YI)^\perp$. Finally, the latter holds by \cref{lemma:birch} with $\mathcal{S} = \ran(YI)$.

\section{Proof of \cref{lemma:yi_exp_minusyiminus1}}
\label{sec:proof_yi_exp_minusyiminus1}

The case $M = 1$ is trivial ($y_1\to\infty$ as $y_1\to\infty$).

Let now $M\geq1$ and assume that the lemma is true for $M$. We will show that it is also true for $M+1$. For fixed $y_0=0$, $y_1, \ldots, y_{M-1}\geq 0$, and $y_{M+1}>0$, the function
\begin{align*}
\mathbb{R} \ni y_M \stackrel{\Phi}{\mapsto} \sum_{i=1}^{M+1} e^{-y_{i-1}} y_i \in \mathbb{R}
\end{align*}
is monotone decreasing on the interval $(-\infty,y_{M-1}+\log y_{M+1}]$, while monotone increasing on the interval $[y_{M-1}+\log y_{M+1},\infty)$. Thus, the restriction $\Phi|_{[0,\infty)}$ attains its minimum either at $0$ or at $y_{M-1}+\log y_{M+1}$.

Note that
\begin{align*}
&\Phi(y_{M-1}+\log y_{M+1}) = \\
&= \left(\sum_{i=1}^{M-1} e^{-y_{i-1}}y_i\right) + e^{-y_{M-1}}(y_{M-1}+\log y_{M+1}) + e^{-y_{M-1}-\log y_{M+1}}y_{M+1}\geq \\
&\geq \left(\sum_{i=1}^{M-1} e^{-y_{i-1}}y_i\right) + e^{-y_{M-1}} \log y_{M+1}\to \infty \text{ as } \log y_{M+1} \to \infty
\end{align*}
due to the induction hypothesis. Further,
\begin{align*}
\Phi(0) = \left(\sum_{i=1}^{M-1} e^{-y_{i-1}}y_i\right) + y_{M+1} \geq y_{M+1} \to \infty \text{ as } y_{M+1} \to \infty.
\end{align*}
This concludes the proof.

\section{The acyclic digraph of the implications}
\label{sec:acyclic_digraph}

\begin{center}
\begin{tikzpicture}[auto]

\node (A) at ( 0, 0  ) {\Cref{lemma:yi_exp_minusyiminus1}};
\node (B) at ( 2, 0  ) {\Cref{lemma:Gzz_neg_many_kappas} \eqref{it:L0}};
\node (C) at ( 4, 0  ) {\Cref{lemma:Gzz_neg_many_kappas} \eqref{it:R0}};
\node (D) at ( 6,-1.5) {\Cref{cor:Gzz_neg_error_rho}};
\node (E) at ( 8,-1.5) {\Cref{lemma:Omega_is_good_ell_general}};
\node (F) at (10,-1.5) {\Cref{thm:intersection_for_all_subspaces_and_all_F}};
\node (G) at (10, 0  ) {\Cref{thm:Epl_cap_P_hatGK_cap_Kperp}};
\node (H) at (12,-1.5) {\Cref{thm:existence_in_each_class}};
\node (I) at ( 2,-1.5) {\Cref{lemma:Gzz_neg} \eqref{it:L0_many_kappas}};
\node (J) at ( 4,-1.5) {\Cref{lemma:Gzz_neg} \eqref{it:R0_many_kappas}};
\node (K) at ( 6,-3  ) {\Cref{lemma:H1_H2_V}};
\node (L) at ( 8,-3  ) {\Cref{lemma:Omega_is_good_ell_2}};
\node (M) at ( 6, 0  ) {\Cref{lemma:H1_H2_V_general}};
\node (N) at ( 8, 1.5) {\Cref{lemma:GYTlogP_ranYTperp}};
\node (O) at ( 6, 1.5) {\Cref{lemma:use_birch_in_proof}};
\node (P) at ( 8, 0)   {\Cref{lemma:hatGK_cap_Kperp}};
\node (Q) at ( 4, 1.5) {\Cref{lemma:psi}};
\node (R) at ( 2, 1.5) {\Cref{lemma:birch}};

\draw[-{Triangle[open,angle=45:6pt]}] (A) -- (B);
\draw[-{Triangle[open,angle=45:6pt]}] (B) -- (C);
\draw[-{Triangle[open,angle=45:6pt]}] (C) -- (D);
\draw[-{Triangle[open,angle=45:6pt]}] (D) -- (E);
\draw[-{Triangle[open,angle=45:6pt]}] (E) -- (F);
\draw[-{Triangle[open,angle=45:6pt]}] (F) -- (H);
\draw[-{Triangle[open,angle=45:6pt]}] (G) -- (H);

\draw[-{Triangle[open,angle=45:6pt]}] (A) -- (I);
\draw[-{Triangle[open,angle=45:6pt]}] (I) -- (J);

\draw[-{Triangle[open,angle=45:6pt]}] (J) -- (L);
\draw[-{Triangle[open,angle=45:6pt]}] (D) -- (L);
\draw[-{Triangle[open,angle=45:6pt]}] (K) -- (L);

\draw[-{Triangle[open,angle=45:6pt]}] (M) -- (E);

\draw[-{Triangle[open,angle=45:6pt]}] (N) -- (G);
\draw[-{Triangle[open,angle=45:6pt]}] (O) -- (P);
\draw[-{Triangle[open,angle=45:6pt]}] (P) -- (G);
\draw[-{Triangle[open,angle=45:6pt]}] (R) -- (Q);
\draw[-{Triangle[open,angle=45:6pt]}] (Q) -- (O);
\end{tikzpicture}
\end{center}

\section*{Acknowledgments}

The author thanks Josef Hofbauer, Stefan M\"uller, and Georg Regensburger for useful discussions, Martin Feinberg for historical details on \cite{deng:feinberg:jones:nachman:2011}, and two anonymous referees for helpful comments.

\bibliographystyle{siamplain}
\bibliography{references}
\end{document}